\numberwithin{equation}{section}
\newcommand{\field}[1]{\mathbb{#1}}
\newcommand{\R}{\field{R}}   
\newcommand{\C}{\field{C}}      
\newcommand{\N}{\field{N}}  
\newcommand{\ddc}{\frac{i}{\pi}\,\partial\overline\partial}
\def\cC{\mathscr{C}}  
\def\cF{\mathscr{F}}      
\def\cL{\mathscr{L}}                       
\def\cO{\mathscr{O}}
\newcommand{\til}[1]{\widetilde{#1}}
\newcommand{\cali}[1]{\mathscr{#1}}
\newcommand{\cI}{\cali{I}}
\newcommand{\cJ}{\cali{J}}
\DeclareMathOperator{\Div}{Div}
\DeclareMathOperator{\rank}{rank}
\DeclareMathOperator{\supp}{supp}
\DeclareMathOperator{\vol}{vol}
\newcommand{\ddbar}{\overline\partial}
\newcommand{\dbar}{\partial}
\newcommand{\reg}{\mathrm{reg}}
\newcommand{\loc}{\mathrm{loc}}
\newtheorem{thm}{Theorem}[section]
\newtheorem{cor}[thm]{Corollary}
\theoremstyle{definition}
\newtheorem{rem}[thm]{Remark}
\theoremstyle{definition}
\newtheorem{defn}[thm]{Definition}
\newtheorem{example}[thm]{Example}
\newcommand{\be}{\begin{eqnarray}}
\newcommand{\ee}{\end{eqnarray}}
\newcommand{\wi}{\widetilde} 
\newcommand{\oh}{\widehat}
\newcommand{\comment}[1]{}
\begin{document}       
\title{Singular holomorphic Morse inequalities on non-compact manifolds}  
 
\author{Dan Coman}

\address{Dan Coman, Department of Mathematics, Syracuse University, Syracuse, NY 13244-1150, USA}
\email{dcoman@syr.edu}
\thanks{D.\ C.\ is partially supported
by the Simons Foundation Grant 853088 and the NSF Grant DMS-2154273}

\author{George Marinescu}
\address{Univerisit\"at zu K\"oln, Mathematisches institut,
Weyertal 86-90, 50931 K\"oln, Germany 
\newline\mbox{\quad}\,Institute of Mathematics `Simion Stoilow', 
Romanian Academy, Bucharest, Romania}
\email{gmarines@math.uni-koeln.de}
\thanks{G.\ M.\ is partially supported by 
 the DFG funded project SFB TRR 191 `Symplectic Structures in Geometry, 
 Algebra and Dynamics' (Project-ID 281071066\,--\,TRR 191) and the ANR-DFG project 
 QuaSiDy (Project-ID 490843120).}
 
\author{Huan Wang}  

\address{Huan Wang, Tong Jing Nan Lu 798, Suzhou, China}     
\email {huanwang2016@hotmail.com}  
\thanks{H.\ W.\ is partially supported by the Joint ICTP-IMU Research Fellowship}

\keywords{Holomorphic Morse inequalities, singular Hermitian metric of a line bundle,
hyperconcave, $q$-convex, $q$-concave manifold, pseudoconvex domain}      
\dedicatory{Dedicated to the memory of Mihnea Col\c{t}oiu}
\date{April 21, 2023}                 
\begin{abstract}    
We obtain asymptotic estimates of the dimension of cohomology 
on possibly non-compact complex manifolds for line bundles endowed 
with Hermitian metrics with algebraic singularities.
We give a unified approach to establishing singular holomorphic 
Morse inequalities for hyperconcave manifolds, pseudoconvex 
domains, $q$-convex manifolds and $q$-concave manifolds, 
and we generalize related estimates of Berndtsson. 
We also consider the case of metrics with more general
than algebraic singularities.
\end{abstract}          
\maketitle               
                                           
\tableofcontents                                 
   
\section{Introduction}\label{sec_intro}      
 
The aim of this article is to establish singular holomorphic Morse 
inequalities on complex manifolds satisfying certain convexity conditions.
The asymptotic estimates of the dimension of the cohomology groups
of high tensor powers of a holomorphic line bundle were motivated
by the Grauert-Riemenschneider conjecture \cite{GR:70}, 
which states that a compact complex manifold with a 
Hermitian holomorphic line bundle whose curvature form is 
positive definite on an open dense subset is a Moishezon manifold. 
Siu \cite{Si1:84} proved this conjecture using the 
Riemann-Roch-Hirzebruch formula and showed that for any 
$q>0$, $\dim H^q(X,L^p)=o(p^{n})$ as $p\rightarrow\infty$, 
for any compact complex manifold $X$ and semi-positive line bundle 
$L$. This can be seen 
as a refinement of the Kodaira-Serre vanishing theorem and of the 
Kodaira embedding theorem. 
Based on Siu's $\dbar\ddbar$ formula and the rescaling trick, 
Berndtsson \cite{Bern:02} improved Siu's result to the optimal 
size $p^{n-q}$, i.e. for every $q>0$ one has that 
$\dim H^q(X,L^p)=O(p^{n-q})$ as $p\rightarrow\infty$, 
for any compact complex manifold $X$ and semi-positive 
line bundle $L$. Later, Berndtsson's estimates 
were extended to various complex 
manifolds \cite{Wh:16,Wh:21b,Wh:21a} and line bundles with metrics 
with algebraic singularities \cite{Wh:21a}.

Motivated by Siu's solution and Witten's analytic proof of 
the standard Morse inequalities, Demailly \cite{De:85} 
established the holomorphic Morse inequalities, which strengthen 
Siu's solution by explicitly showing that 
\begin{equation}\label{e:Demailly}
\dim H^0(X,L^p)\geq \frac{p^n}{n!}\int_{X(\leq 1)}c_1(L,h^L)^n+o(p^n),\, p\rightarrow\infty,
\end{equation}
where $X$ is a compact complex manifold and $L$ is 
a line bundle with a smooth Hermitian metric $h^L$. 
A powerful tool for finding holomorphic sections, 
Demailly's holomorphic Morse inequalities have been extended 
to various classes of complex manifolds 
\cite{Bou:89,Mar92b,Mar96,Mar05,MarTC,MM07,TCM:01} and 
line bundles with metrics having algebraic singularities \cite{Bo:93}. 
It is noteworthy that Bonavero's singular holomorphic Morse inequalities 
\cite{Bo:93} on compact complex manifolds, as well as the criteria of 
Ji-Shiffman \cite{JS:93} and Takayama \cite{Ta:94}, 
provide a complete characterisation for Moishezon manifolds, and 
the bigness of line bundles \cite{MM07}. 
Together with Fujita's approximate Zariski decomposition \cite{Fuji:94} 
and Demailly's approximation theorem for positive closed currents \cite{Dem:92}, 
the singular holomorphic Morse inequalities lead to Boucksom's volume 
formula for pseudoeffective line bundles on compact K\"{a}hler 
manifolds \cite{Bouck:02}. 
We refer to \cite{Dem_analmeth_2012,MM07}
for comprehensive studies of the holomorphic Morse inequalities.
       
A natural problem is to establish holomorphic Morse inequalities, 
as well as Siu-Berndtsson type estimates, for complex manifolds 
possessing a line bundle endowed
with a metric with algebraic singularities. 
In this paper we consider the case when the singular locus of such a metric is compact. 
 
Let $M$ be a connected
complex manifold of dimension $n$, $L$ be a holomorphic 
line bundle on $M$ and $h^L$ be a Hermitian metric on $L$ 
with algebraic singularities, see Section \ref{sec_algsing}. 
We denote by $\cI(h^L)$  
the Nadel multiplier ideal sheaf of the
Hermitian metric $h^L$, cf.\ Definition \ref{D:NMI}.
Let $R^{(L,h^L)}$ be the curvature current of $(L,h^L)$ 
and set $c_1(L,h^L)=\frac{i}{2\pi}\,R^{(L,h^L)}$. 
Let $S(h^L)$ be the singular locus of $h^L$, 
which is a closed analytic subset in $M$. 
On the set $M\setminus S(h^L)$ the metric $h^L$
and its curvature $R^{(L,h^L)}$ are smooth.
For $q\in\{0,1,\ldots,n\}$ and $x\in M\setminus S(h^L)$ 
we say that the curvature $R^{(L,h^L)}_x$ has signature
$(q,n-q)$ if it has $q$ negative eigenvalues and 
$n-q$ positive eigenvalues as a Hermitian endomorphism of
$T^{(1,0)}_xX$. 
We introduce the $q$-index set 
\begin{equation}\label{eq:mq}
M(q)=M(q,h^L)=\{x\in M\setminus S(h^L):\text{$R^{(L,h^L)}_x$ 
has signature $(q,n-q)$}\},
\end{equation}
and we set 
\[M(\geq r):=\bigcup_{q=r}^n M(q)\,,\,\;M(\leq r):=
\bigcup_{q=0}^r M(q).\] 
It is clear that $M(q)$, $M(\geq r)$ and $M(\leq r)$ 
are open subsets of $M$. 
  
The first result deals with singular holomorphic Morse 
inequalities for hyperconcave manifolds. This subclass
of the class of $1$-concave manifolds
was introduced and studied by Mihnea Col\c{t}oiu \cite{Colt:90,Colt:91},
see also \cite{MD06}. 
\begin{thm}\label{thm_hyper}
Let $M$ be a hyperconcave manifold of dimension $n$ and 
$L$ be a holomorphic line bundle on $M$. Let $h^L$ be a 
Hermitian metric on $L$ with algebraic singularities such 
that $S(h^L)\subset Z$ and $c_1(L,h^L)\geq 0$ on $M\setminus Z$ 
for some compact $Z\subset M$. Then, as $p\to\infty$,
\[\dim H^0(M,L^p\otimes K_M\otimes \cI(h^{L^p}))\geq 
\frac{p^n}{n!}\int_{M(\leq 1)}c_1(L,h^L)^n+o(p^n).\]
\end{thm}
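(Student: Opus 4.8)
The plan is to combine three ingredients: (i) Bonavero's singular holomorphic Morse inequalities on a suitable compact (or relatively compact) model obtained by a blow-up that resolves the singularities of $h^L$, (ii) the fact that on a hyperconcave manifold the obstructions to extending holomorphic sections come only from a compact ``core'', together with finiteness/vanishing of the relevant cohomology in positive degree near the concave end, and (iii) a comparison of the multiplier ideal sheaves $\cI(h^{L^p})$ downstairs and upstairs. Concretely, since $h^L$ has algebraic singularities with $S(h^L)\subset Z$ and $Z$ is compact, I would first take a modification $\pi\colon\wi M\to M$, proper and biholomorphic over $M\setminus S(h^L)$, such that $\pi^*h^L$ has divisorial singularities along a normal-crossing divisor $E$; then $\pi^*L=\wi L\otimes\cO(-D)$ for an effective $\pi$-exceptional divisor $D$ supported on $E$, with $\wi L$ carrying a \emph{smooth} metric $\wi h$ whose curvature is $\pi^*c_1(L,h^L)$ away from $E$. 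The key point is that $\wi M$ is again hyperconcave (a proper modification over a compact set preserves hyperconcavity, because the exhaustion function can be pulled back and modified near $E$), so the machinery available on hyperconcave manifolds applies to $(\wi M,\wi L,\wi h)$.

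\textbf{Main steps.} \emph{Step 1.} Establish the projection formula $\pi_*\bigl(\cO(K_{\wi M})\otimes\wi L^p\bigr)=K_M\otimes L^p\otimes\cI(h^{L^p})$ (the standard identity relating direct images under a log-resolution to Nadel multiplier ideals for metrics with algebraic singularities), and use it together with $R^q\pi_*(\cdots)=0$ for $q>0$ (Takegoshi/Grauert–Riemenschneider-type vanishing for the relative canonical bundle twisted by a semipositive line bundle — here $\wi L$ is semipositive away from the exceptional set and the correction $\cO(-D)$ is arranged so that the needed semipositivity holds in a neighborhood of the fibers) to get
\[
H^0\bigl(M,L^p\otimes K_M\otimes\cI(h^{L^p})\bigr)\cong H^0\bigl(\wi M,\wi L^p\otimes K_{\wi M}\bigr)
\]
for all $p$. \emph{Step 2.} On the hyperconcave manifold $\wi M$ apply the (smooth) holomorphic Morse inequalities for hyperconcave manifolds — this is exactly the $h^L$ smooth, $S(h^L)=\varnothing$ case of the present circle of results, which by hypothesis we may quote — to obtain
\[
\dim H^0(\wi M,\wi L^p\otimes K_{\wi M})\ \ge\ \frac{p^n}{n!}\int_{\wi M(\le 1,\wi h)}c_1(\wi L,\wi h)^n+o(p^n).
\]
\emph{Step 3.} Identify the two integrals: since $\pi$ is a biholomorphism over $M\setminus S(h^L)$, since $c_1(\wi L,\wi h)$ and $\pi^*c_1(L,h^L)$ agree there, and since the exceptional set has Lebesgue measure zero (so contributes nothing to $\int c_1(\wi L,\wi h)^n$ — one must check the smooth curvature of $\wi h$ is locally integrable near $E$, which follows from the algebraic-singularity normal form), we get $\int_{\wi M(\le 1,\wi h)}c_1(\wi L,\wi h)^n=\int_{M(\le 1,h^L)}c_1(L,h^L)^n$, and the signature sets match up to a null set. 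Combining Steps 1–3 gives the theorem.

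\textbf{The main obstacle} is Step 1, and specifically controlling what happens near the exceptional divisor: one needs the semipositivity hypothesis $c_1(L,h^L)\ge 0$ on $M\setminus Z$ to interact correctly with the correction divisor $D$ so that the higher direct images vanish and the $H^0$ on $\wi M$ is genuinely isomorphic (not just surjects onto) the multiplier-ideal cohomology on $M$. Since the curvature is only assumed semipositive \emph{outside} the compact set $Z$ (and $S(h^L)\subset Z$), the resolution and the vanishing theorem must be applied locally near the fibers over $Z$, where one has enough positivity from the singular part of the metric itself (the divisorial part $\cO(-D)$ on $\wi M$ has curvature controlled by the algebraic singularity data), while the concave end away from $Z$ is handled by the hyperconcavity. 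Making the bookkeeping of $D$, $K_{\wi M}$ versus $\pi^*K_M$, and the local positivity near $E$ all consistent — and checking that hyperconcavity indeed passes to $\wi M$ — is where the real work lies; everything else is either a measure-zero argument or a direct citation of the smooth hyperconcave Morse inequalities.
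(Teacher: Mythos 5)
Your overall strategy is the same as the paper's (resolve the algebraic singularities by a modification $\pi:\til M\to M$ that is biholomorphic outside the compact singular locus, note that $\til M$ is again hyperconcave, transfer the cohomology via a direct-image/multiplier-ideal identity, apply the smooth hyperconcave Morse inequality upstairs, and match the integrals). However, Step 1 as you state it has a genuine gap coming from the rationality of the exponent $c$ in \eqref{e:algsing}. Upstairs one has $\cI(h^{\til L^p})=\cO_{\til M}\bigl(-\sum_j\lfloor pcc_j\rfloor\til D_j\bigr)$ (see \eqref{e:nlp}), and since $c=r/m$ is only rational the divisor $\sum_j\lfloor pcc_j\rfloor\til D_j$ is \emph{not} $p$ times a fixed integral divisor. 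Hence there is no single line bundle $\wi L$ with a smooth metric for which $H^0(M,L^p\otimes K_M\otimes\cI(h^{L^p}))\cong H^0(\til M,\wi L^p\otimes K_{\til M})$ for all $p$; your proposed isomorphism only makes sense when all $cc_j\in\Z$. The paper handles this by setting $\oh L=\til L^m\otimes\cO_{\til M}(-\til D)$ as in \eqref{e:dvd}, writing $p=mp'+m'$ with $0\le m'<m$, and absorbing the remainder into finitely many bundles $E_{m'}=\til L^{m'}\otimes\cO_{\til M}(-\sum_j\lfloor m'cc_j\rfloor\til D_j)$, so that the relevant group is $H^0(\til M,\oh L^{p'}\otimes E_{m'}\otimes K_{\til M})$. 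This has a second consequence you did not anticipate: the Morse inequality you then need is not the plain smooth hyperconcave case, but a version for $\oh L^{p'}$ twisted by an auxiliary line bundle $E_{m'}$ which is only semi-positive outside a compact set (the paper's Theorem \ref{T:hcms}, an extension of \cite[Theorem 3.4.9]{MM07} requiring a short separate verification that the $L^2$-estimates still go through); quoting the untwisted smooth statement, as in your Step 2, does not suffice.

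Two smaller points. First, your appeal to relative vanishing $R^q\pi_*=0$ is unnecessary for this theorem: since only $H^0$ is involved, $H^0(M,\pi_*\cF)=H^0(\til M,\cF)$ holds with no vanishing, and the substantive input is the direct-image identity for the multiplier ideal (the paper's Theorem \ref{thm_tech}(C), proved via Leray and Nadel vanishing, is needed in this generality for the higher-degree results, not here). Second, since $M$ is non-compact the set $M(0)$ need not be relatively compact, so the finiteness of $\int_{M(\le1)}c_1(L,h^L)^n$ is not automatic and "measure-zero matching of signature sets" does not settle it; the paper deduces $0\le\int_{\til M(0)}c_1(\oh L,h^{\oh L})^n<+\infty$ by combining the Morse inequality \eqref{e:hcms} with Siegel's lemma \eqref{e:Siegel}, and you should include such an argument before asserting the equality of integrals in your Step 3.
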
       
If the metric $h^L$ is smooth,
Theorem \ref{thm_hyper} 
reduces to \cite[Theorem 3.4.9]{MM07}. 
As consequences of Theorem \ref{thm_hyper}, 
we obtain an estimate for the adjoint volume of a line bundle 
(Corollary \ref{cor_ajvol}), and a Siu-Demailly type criterion 
for Moishezon spaces with isolated singularities (Corollary \ref{cor_moisp}).  
Theorem \ref{thm_hyper} implies a version of Bonavero's singular 
holomorphic Morse inequality for certain metrics with more 
general singularities than the algebraic ones, cf.\ Theorem \ref{T:hyperapp}.
 
We consider next the case of singular holomorphic Morse 
inequalities on pseudoconvex domains. 
 
 \begin{thm}\label{thm_pcvx}
 	Let $M\Subset V$ be a smooth pseudoconvex 
	domain in a complex manifold $V$ of dimension 
	$n$ and $L,E$ be holomorphic vector bundles on $V$ 
	with $\rank (L)=1$. Let $h^L$ be a Hermitian metric on 
	$L$ with algebraic singularities such that $S(h^L)\subset M$ 
	and $c_1(L,h^L)>0$ on the boundary of $M$. 
	Then, as $p\rightarrow \infty$, 
 	\begin{equation}\label{eq_pdchmi}
 	\dim H^0(M,L^p\otimes E\otimes \cI(h^{L^p}))\geq
	\rank(E)\,\frac{p^n}{n!}\int_{M(\leq 1)} c_1(L,h^L)^n+o(p^n).
 	\end{equation}
\end{thm}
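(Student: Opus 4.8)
The plan is to reduce Theorem~\ref{thm_pcvx} to the holomorphic Morse inequalities for a \emph{smooth} Hermitian line bundle on a pseudoconvex domain, by resolving the singularities of $h^L$ in the spirit of Bonavero~\cite{Bo:93}. Let $\alpha\in\mathbb{Q}_{>0}$ be the exponent of the algebraic singularities of $h^L$. There is a proper modification $\pi\colon\til V\to V$, biholomorphic over $V\setminus S(h^L)$, obtained by blowing up and resolving the ideal sheaf defining the singularities of $h^L$, together with an effective $\mathbb{Q}$-divisor $D\geq0$ on $\til V$ with simple normal crossing support, contained in the exceptional set $\cE:=\pi^{-1}(S(h^L))$, such that
\[
\pi^*h^L=h_{\til L}\otimes h^{\mathrm{can}}_{\cO(\alpha D)},
\]
where $h^{\mathrm{can}}_{\cO(\alpha D)}$ is the canonical singular metric on $\cO(\alpha D)$, with curvature current $\alpha[D]$, and $h_{\til L}$ is a smooth Hermitian metric on the $\mathbb{Q}$-line bundle $\til L:=\pi^*L\otimes\cO(-\alpha D)$. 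In particular $c_1(\til L,h_{\til L})$ is a smooth form on $\til V$, and $c_1(\til L,h_{\til L})=\pi^*c_1(L,h^L)$ on $\til V\setminus\cE$. Set $\til M:=\pi^{-1}(M)$. Since $S(h^L)\Subset M$, the map $\pi$ is a biholomorphism near $\partial M$; hence $\til M\Subset\til V$ is again a smooth pseudoconvex domain, and $c_1(\til L,h_{\til L})=\pi^*c_1(L,h^L)>0$ on $\partial\til M$.

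Next I would pass to the resolution at the level of cohomology. By the transformation rule for multiplier ideal sheaves under modifications (Nadel) applied to the weight of $h^{L^p}$, whose pullback equals a smooth term plus $p$ times the divisorial weight of $\alpha D$, one has
\[
\cI(h^{L^p})=\pi_*\bigl(\cO_{\til M}(K_{\til M/M}-\lfloor p\alpha D\rfloor)\bigr),
\]
$K_{\til M/M}$ being the relative canonical divisor of $\pi$. Twisting by $L^p\otimes E$ and using the projection formula gives
\[
H^0\bigl(M,L^p\otimes E\otimes\cI(h^{L^p})\bigr)\cong H^0\bigl(\til M,\pi^*(L^p\otimes E)\otimes\cO_{\til M}(K_{\til M/M}-\lfloor p\alpha D\rfloor)\bigr).
\]
Fix $q_0\in\N$ with $q_0\alpha D$ an integral divisor and write $p=q_0k+r$, $0\leq r<q_0$; since $q_0k\alpha D$ is integral, $\lfloor p\alpha D\rfloor=q_0k\alpha D+\lfloor r\alpha D\rfloor$, and therefore
\[
\pi^*(L^p)\otimes\cO_{\til M}(-\lfloor p\alpha D\rfloor)\cong\cL^{\otimes k}\otimes F_r,\qquad
\cL:=\pi^*(L^{q_0})\otimes\cO_{\til M}(-q_0\alpha D),\quad F_r:=\pi^*(L^r)\otimes\cO_{\til M}(-\lfloor r\alpha D\rfloor),
\]
where $\cL$ is a genuine line bundle carrying the smooth metric $h_{\til L}^{\otimes q_0}$ and $\{F_0,\dots,F_{q_0-1}\}$ is a finite family of fixed line bundles on $\til M$. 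Setting $\til E_r:=F_r\otimes\pi^*E\otimes\cO_{\til M}(K_{\til M/M})$, so that $\rank\til E_r=\rank E$, we get
\[
\dim H^0\bigl(M,L^p\otimes E\otimes\cI(h^{L^p})\bigr)=\dim H^0\bigl(\til M,\cL^{\otimes k}\otimes\til E_r\bigr).
\]

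It remains to estimate the right-hand side. Apply the holomorphic Morse inequalities for a smooth Hermitian line bundle on the pseudoconvex domain $\til M$ --- i.e.\ Theorem~\ref{thm_pcvx} in the case of a smooth metric, which rests on Demailly-type $L^2$-estimates for $\overline{\partial}$ on $\til M$ together with the finite-dimensionality forced by the strict positivity of the curvature near $\partial\til M$, cf.\ \cite{MM07} --- to $(\til M,\cL,h_{\til L}^{\otimes q_0},\til E_r)$; since $c_1(\cL,h_{\til L}^{\otimes q_0})=q_0\,c_1(\til L,h_{\til L})>0$ on $\partial\til M$, this yields, uniformly in $r\in\{0,\dots,q_0-1\}$,
\[
\dim H^0(\til M,\cL^{\otimes k}\otimes\til E_r)\geq\rank(E)\,\frac{k^n}{n!}\int_{\til M(\leq1,\,h_{\til L})}\bigl(q_0\,c_1(\til L,h_{\til L})\bigr)^n+o(k^n).
\]
Finally, $\cE$ is a proper analytic subset of $\til M$, hence Lebesgue-null, and on $\til M\setminus\cE$ the map $\pi$ is a biholomorphism onto $M\setminus S(h^L)$ with $c_1(\til L,h_{\til L})=\pi^*c_1(L,h^L)$; since $c_1(\til L,h_{\til L})^n$ is a fixed smooth form, the index sets match off $\cE$ and the change of variables gives
\[
\int_{\til M(\leq1,\,h_{\til L})}c_1(\til L,h_{\til L})^n=\int_{M(\leq1)}c_1(L,h^L)^n.
\]
Combining these, using $k^nq_0^n=(p-r)^n=p^n+O(p^{n-1})$, $o(k^n)=o(p^n)$ and the finiteness of the family $\{F_r\}$, yields \eqref{eq_pdchmi}.

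The main obstacle is the first step: constructing the modification $\pi$ and verifying that $\til M$ is a bona fide \emph{smooth} pseudoconvex domain on which $c_1(\til L,h_{\til L})$ remains strictly positive along the boundary --- this is precisely where the hypothesis $S(h^L)\Subset M$ is used, ensuring that $\pi$ does not alter a neighbourhood of $\partial M$ --- and then organizing the twisting bundles $F_r$ into a finite family, so that the correction terms are $O(p^{n-1})$ and neither the leading constant nor the index set $M(\leq1)$ is affected. One also needs the smooth-metric case of the theorem as an input; alternatively, one could run Demailly's $L^2$-estimates for $\overline{\partial}$ directly on $M$ with the singular weight, but the reduction above is cleaner and more in line with the unified approach.
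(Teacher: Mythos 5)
Your proposal is correct and follows essentially the same route as the paper: resolve the algebraic singularities in the style of Bonavero (Theorem \ref{thm_tech}), twist to obtain a genuine line bundle with smooth metric together with the finite family of remainder bundles via $p=mp'+m'$, apply the smooth-metric Morse inequality for pseudoconvex domains (Theorem \ref{T:pcvxs}), and identify the integrals through the modification using that the exceptional set is negligible. The only cosmetic difference is that you obtain the $H^0$-identification directly from the multiplier-ideal transformation rule and the projection formula, where the paper invokes its Theorem \ref{thm_tech}(C).
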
 
  
Our third result deals with singular holomorphic Morse inequalities 
on $q$-convex manifolds.   
       
\begin{thm}\label{thm_hmi_q}
	Let $q,s\in \N$, $1\leq q,s\leq n$, $M$ be a $q$-convex 
	manifold of dimension $n$, and $L,E$ be holomorphic 
	vector bundles on $M$ with $\rank(L)=1$. Let $h^L$ 
	be a Hermitian metric on $L$ with algebraic singularities 
	such that $S(h^L)\subset Z$ and $c_1(L,h^L)$ has at least 
	$n-s+1$ non-negative eigenvalues on $M\setminus Z$, 
	for some compact set $Z\subset M$. Then, for any $\ell\geq s+q-1$, 
	the following strong and weak Morse inequalities hold as $p\to\infty$:
\begin{equation} 
\sum_{j=\ell}^n(-1)^{\ell-j}
\dim H^j(M, L^p\otimes  E\otimes\cI(h^{ L^p}))
\leq \rank(E)\,\frac{p^n}{n!}
\int_{M(\geq\ell)}(-1)^\ell c_1(L,h^L)^n+o(p^n), \label{eq_hmi_q}
\end{equation}
\begin{equation} 
\dim H^\ell(M, L^p\otimes  E\otimes \cI(h^{ L^p}))\leq 
\rank(E)\,\frac{p^n}{n!}\int_{M(\ell)}(-1)^\ell c_1(L,h^L)^n+o(p^n). 
\label{eq_hmi_qw}
\end{equation}
\end{thm}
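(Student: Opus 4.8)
**The plan is to reduce Theorem \ref{thm_hmi_q} to the known smooth holomorphic Morse inequalities on $q$-convex manifolds by desingularizing the metric.** Since $h^L$ has algebraic singularities with $S(h^L)\subset Z$ and $Z$ is compact, I would first invoke a resolution procedure: blow up $M$ along the (coherent) ideal associated with the algebraic singularities of $h^L$ to obtain a proper modification $\pi\colon\widetilde M\to M$, which is an isomorphism over $M\setminus Z$. On $\widetilde M$ the pulled-back bundle decomposes as $\pi^*L = \widetilde L \otimes \mathscr{O}(-D)$ for an effective $\pi$-exceptional divisor $D$, and $h^L$ induces a \emph{smooth} Hermitian metric $h^{\widetilde L}$ on $\widetilde L$ whose curvature satisfies $\pi^*c_1(L,h^L) = c_1(\widetilde L,h^{\widetilde L}) + [D]$ as currents. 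The multiplier ideal sheaf transforms via $\pi_*\big(\mathscr{O}(K_{\widetilde M/M} - \lfloor pD\rfloor)\big) = \mathscr{O}(K_M)^{-1}\otimes\cI(h^{L^p})$ up to the usual adjunction bookkeeping, so that by the projection formula and the Leray spectral sequence (using that $R^i\pi_*$ of the relevant sheaf vanishes for $i>0$, which holds because $\pi$ is a modification and the sheaf is suitably positive along the fibers) one gets an isomorphism
\[
H^j\big(M, L^p\otimes E\otimes\cI(h^{L^p})\big)\cong H^j\big(\widetilde M, \widetilde L^p\otimes \pi^*E\otimes \mathscr{O}(-\lfloor pD\rfloor)\otimes K_{\widetilde M/M}\big)
\]
for all $j$, where the right-hand side involves only a \emph{smooth} metric.

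**Next I would check that $\widetilde M$ is again $q$-convex** and that the curvature hypotheses transfer. Here a key point is that $\pi$ is an isomorphism outside the compact set $Z$, so an exhaustion function on $M$ that is $q$-convex outside a compact set pulls back to one on $\widetilde M$ that is $q$-convex outside the compact set $\pi^{-1}(Z)$; hence $\widetilde M$ is $q$-convex. On the complement $\widetilde M\setminus\pi^{-1}(Z)\cong M\setminus Z$, the smooth curvature $c_1(\widetilde L,h^{\widetilde L})$ agrees with $c_1(L,h^L)$, so it has at least $n-s+1$ non-negative eigenvalues there, i.e.\ the smooth index set $\widetilde M(\geq s)$ has compact closure, and for $j\geq s$ we have $\widetilde M(j)\subset\pi^{-1}(Z)$ modulo the divisor $D$. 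I would then apply the smooth $q$-convex holomorphic Morse inequalities (the version in \cite{MM07}, Theorem 3.2.13 or thereabouts, for the bundle $\widetilde L^p\otimes F_p$ with $F_p = \pi^*E\otimes K_{\widetilde M/M}\otimes\mathscr{O}(-\lfloor pD\rfloor)$) to obtain, for $\ell\geq s+q-1$,
\[
\sum_{j=\ell}^n(-1)^{\ell-j}\dim H^j(\widetilde M,\widetilde L^p\otimes F_p)\leq \frac{p^n}{n!}\int_{\widetilde M(\geq\ell)}(-1)^\ell c_1(\widetilde L,h^{\widetilde L})^n + o(p^n)
\]
and the analogous weak inequality.

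**Finally I would transport the integral back to $M$.** Since $D$ is $\pi$-exceptional, $\pi_*$ of the smooth curvature power equals the singular curvature power away from a set of measure zero, and more precisely $\int_{\widetilde M(\geq\ell)}c_1(\widetilde L,h^{\widetilde L})^n = \int_{M(\geq\ell)}c_1(L,h^L)^n$ because the two index sets coincide (as open sets) under $\pi$ up to the measure-zero divisor, and $c_1(L,h^L)^n$ is understood as the absolutely continuous part of the Monge–Ampère measure of the singular curvature (equivalently, the integral of the smooth form on $M\setminus S(h^L)$). Combining this with the cohomology isomorphism yields \eqref{eq_hmi_q} and \eqref{eq_hmi_qw}; the factor $\rank(E)$ enters exactly as in the smooth case because $\pi^*E$ has the same rank and the top intersection number is multiplied by it in the smooth Morse inequality for bundles of higher rank.

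**The main obstacle I anticipate is the vanishing $R^i\pi_* = 0$ needed for the cohomology isomorphism**, together with the precise identification of multiplier ideals under the modification — one must choose the resolution carefully (e.g.\ a log resolution making $pD + (\text{relevant divisor})$ simple normal crossing for all $p$ simultaneously, which is possible since the singularities are algebraic and fixed) and verify that the twisting sheaf $\mathscr{O}(-\lfloor pD\rfloor)\otimes K_{\widetilde M/M}$ is acyclic for $\pi_*$; this is a relative Grauert–Riemenschneider / local vanishing statement. A secondary subtlety is that $q$-convexity of $\widetilde M$ must be argued with some care near $\pi^{-1}(Z)$, but since blow-ups along compact analytic sets do not affect the convexity at infinity, this should follow from the standard fact that a proper modification preserves the $q$-convex category; I would cite the appropriate result (e.g.\ from the references on $q$-convex manifolds in the paper) rather than reprove it.
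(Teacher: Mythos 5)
Your overall strategy is the paper's: resolve the algebraic singularities by a proper modification $\pi\colon\til M\to M$ (an isomorphism off the compact set $Z$, so $q$-convexity is preserved), transfer the cohomology of $L^p\otimes E\otimes\cI(h^{L^p})$ to cohomology of a bundle with smooth metric upstairs, apply the smooth $q$-convex Morse inequalities (Theorem \ref{lem_qhmi}), and transport the curvature integral back. The cohomology isomorphism and the identification of the index sets and integrals are also handled in the paper at the same level of detail you give (via Theorem \ref{thm_tech}, which cites Bonavero and Leray plus Nadel vanishing), so those points are not the issue.

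The genuine gap is in the step where you apply the smooth Morse inequalities to $\til L^{p}\otimes F_p$ with $F_p=\pi^*E\otimes K_{\til M/M}\otimes\cO_{\til M}(-\lfloor pD\rfloor)$. First, your divisor $D$ has coefficients involving the rational number $c$, so $\cO_{\til M}(-D)$ and the asserted smooth metric on $\til L$ with $\pi^*c_1(L,h^L)=c_1(\til L,h^{\til L})+[D]$ are not well defined unless the numbers $cc_j$ are integers; this is exactly why one must clear denominators. Second, and more seriously, the twist $\cO_{\til M}(-\lfloor pD\rfloor)$ depends on $p$ and its curvature grows linearly in $p$, so it cannot be treated as the fixed auxiliary bundle $E$ in Theorem \ref{lem_qhmi}: the statement you invoke gives an expansion in powers of a \emph{fixed} Hermitian line bundle with a \emph{fixed} twist, and a $p$-dependent twist of linear growth would change the leading term, not just the $o(p^n)$ error. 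The paper's device to close this gap is to write $c=r/m$, form the fixed line bundle $\oh L=\til L^m\otimes\cO_{\til M}(-\til D)$ of \eqref{e:dvd}, which carries a genuinely smooth metric with $c_1(\oh L,h^{\oh L})=mc_1(\til L,h^{\til L})$ off $\til D$, and then split $p=mp'+m'$ with $0\leq m'<m$: by \eqref{e:nlp} one gets the decomposition \eqref{e:decomp}, in which the leftover twist $F_{m'}$ of \eqref{e:remainder} takes only finitely many values independent of $p$. One then applies Theorem \ref{lem_qhmi} to powers $\oh L^{p'}$ with each fixed $F_{m'}$, and recovers $p^n$ from $(mp')^n$ together with the factor $m^n$ relating $\int c_1(\oh L,h^{\oh L})^n$ to $\int c_1(L,h^L)^n$. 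Without this $m$-th power and residue bookkeeping (or an equivalent argument showing the bounded difference $\lfloor pD\rfloor-pD$ can be absorbed), your appeal to the smooth theorem is not justified. Finally, note that \eqref{eq_hmi_qw} is obtained by adding the strong inequality \eqref{eq_hmi_q} for $\ell$ and $\ell+1$, a small step your sketch leaves implicit.
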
 
Note that the hypotheses of Theorem \ref{thm_hmi_q} imply
that for $\ell\geq s+q-1$ we have $M(\ell)=Z(\ell)$, thus
the integrals on the right-hand side of the Morse inequalities are
finite.

Theorem \ref{thm_hmi_q} is a generalization for singular 
Hermitian metrics of the main theorem of \cite{Bou:89}.
If $c_1(L,h)\geq0$ on $X\setminus S(h^L)$ then by \eqref{eq_hmi_qw}, 
$\dim H^j(M, L^p\otimes  E\otimes \cI(h^{ L^p}))=o(p^n)$ for $j\geq q$.  
This can be improved as follows.

\begin{thm}\label{thm_qcvx_bb}
	Let $M$ be a $q$-convex manifold of dimension $n$, $1\leq q\leq n$, let $K$ be the exceptional set of $M$, and $L,E$ be holomorphic  line bundles on $M$. Let $h^L$ be a Hermitian metric on $L$ with algebraic singularities such that $S(h^L)$ is compact and $c_1(L,h^L)\geq 0$ on $U$ (in the sense of currents), where $U\subset M$ is open and $K\cup S(h^L)\subset U$. Then there exists $C>0$ such that for every $j\geq q$ and $p\geq 1$,
	\begin{equation}
		\dim H^{j}(M,L^p\otimes E\otimes \cI(h^{L^p}))\leq Cp^{n-j}. 
	\end{equation}    
\end{thm}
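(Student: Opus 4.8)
The plan is to reduce, by resolving the singularities of $h^L$, to a Berndtsson-type estimate for a smooth metric on a $q$-convex manifold. Since $h^L$ has algebraic singularities it is attached to a coherent ideal sheaf $\mathcal J$ on $M$ and a constant $c>0$, which we may take rational, so that the local weight of $h^L$ is $c\log|\mathcal J|^{2}+(\text{smooth})$ and $S(h^L)=V(\mathcal J)$. Fix a log resolution $\pi\colon\widetilde{M}\to M$ of $\mathcal J$; thus $\pi$ is biholomorphic over $M\setminus S(h^L)$ and $\mathcal J\cdot\mO_{\widetilde{M}}=\mO_{\widetilde{M}}(-D)$ for an effective normal crossing divisor $D$ with $\supp D\subset\pi^{-1}(S(h^L))$. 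Since $K$ and $S(h^L)$ are compact and $\pi$ is proper, $\widetilde{M}$ is again $q$-convex: if $\varphi$ is a $q$-convex exhaustion of $M$, then $\varphi\circ\pi$ is a smooth exhaustion of $\widetilde{M}$ which is $q$-convex off the compact set $\widetilde{K}:=\pi^{-1}(K\cup S(h^L))\subset\widetilde{U}:=\pi^{-1}(U)$. Because $h^L$ has algebraic singularities, $\pi^{*}c_1(L,h^L)=c[D]+\gamma$ with $\gamma$ a smooth real $(1,1)$-form; off $\supp D$ one has $\gamma=\pi^{*}c_1(L,h^L)\geq 0$ on $\widetilde{U}$, hence by continuity $\gamma\geq 0$ on all of $\widetilde{U}$. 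Finally, the multiplier ideal identity $\cI(h^{L^p})=\pi_{*}\mO_{\widetilde{M}}(K_{\widetilde{M}/M}-\lfloor pcD\rfloor)$ (cf.\ Bonavero \cite{Bo:93}), the vanishing $R^{i}\pi_{*}\mO_{\widetilde{M}}(K_{\widetilde{M}/M}-\lfloor pcD\rfloor)=0$ for $i>0$, the projection formula and the Leray spectral sequence give, for every $j$,
\[
H^{j}\bigl(M,L^{p}\otimes E\otimes\cI(h^{L^p})\bigr)\;\cong\;
H^{j}\bigl(\widetilde{M},\,\pi^{*}(L^{p}\otimes E)\otimes\mO_{\widetilde{M}}(K_{\widetilde{M}/M}-\lfloor pcD\rfloor)\bigr)=:H^{j}(\widetilde{M},B_{p}).
\]
Writing $\lfloor pcD\rfloor=pcD-\{pcD\}$, we have $B_{p}=\widetilde{L}^{\otimes p}\otimes F\otimes\mO(\{pcD\})$ with $\widetilde{L}:=\pi^{*}L\otimes\mO(-cD)$ carrying the smooth metric of curvature $\gamma$, with $F:=\pi^{*}E\otimes K_{\widetilde{M}/M}$ fixed, and with $\mO(\{pcD\})$ an effective $\mathbb{Q}$-twist of coefficients in $[0,1)$, of which (as $c\in\mathbb{Q}$) only finitely many occur. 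So it is enough to bound $\dim H^{j}(\widetilde{M},B_{p})\leq Cp^{n-j}$ uniformly in $p$, for $j\geq q$.

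\textbf{Step 2: the Berndtsson estimate on $\widetilde{M}$.} Fix $j\geq q$ and work along a fixed residue class of $p$ modulo the common denominator of the coefficients of $D$, so that $\widetilde{L}^{\otimes p}$ becomes a genuine power and $\mO(\{pcD\})$ is fixed. By Andreotti--Grauert theory $H^{j}(\widetilde{M},B_{p})$ is finite-dimensional, and by the $L^{2}$-method on $q$-convex manifolds — a complete Hermitian metric adapted to $\varphi\circ\pi$ together with a fast-growing convex weight $\chi(\varphi\circ\pi)$, as behind Theorem \ref{thm_hmi_q} and in \cite{Bou:89,MM07} — it is represented by $\overline\partial$-harmonic $(0,j)$-forms with values in $B_{p}$. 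The $q$-convexity of $\varphi\circ\pi$ off $\widetilde{K}$ yields, for such forms with $j\geq q$, a large positive term in the Bochner--Kodaira--Nakano estimate away from $\widetilde{K}$, so that the low-energy spectral subspace of the Kodaira--Laplace operator $\Box_{p}$ localizes, up to an error negligible as $p\to\infty$, to a fixed compact neighbourhood $\widetilde{W}$ of $\widetilde{K}$ with $\widetilde{W}\subset\widetilde{U}$, where the curvature of $B_{p}$ (for a natural choice of metric) is $p\gamma+\{pcD\}+O(1)$ with $\gamma\geq 0$. On $\widetilde{W}$ one runs Berndtsson's argument \cite{Bern:02}: via Siu's $\partial\overline\partial$ formula, a partition of unity, and a rescaling of normal coordinates around each point by $p^{-1/2}$, the rescaled operator converges to the model Laplacian of the constant semipositive form $\gamma(x_{0})$, whose number of eigenvalues below a fixed threshold $\lambda_{0}>0$ is $O(p^{n-j})$ (this is where $j\geq q\geq 1$ enters). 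Summing over the partition of unity, and absorbing the non-negative current $\{pcD\}$ — which has trivial multiplier ideal and so may be smoothed without changing the cohomology — one gets $\#\{\text{eigenvalues of }\Box_{p}\le\lambda_{0}\}\leq Cp^{n-j}$; since harmonic forms lie in this subspace, $\dim H^{j}(\widetilde{M},B_{p})\leq Cp^{n-j}$ with $C$ independent of $p$. With Step 1 this gives the theorem.

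\textbf{Main obstacle.} The crux is to make these ingredients cooperate with constants uniform in $p$. First, the non-compact $L^{2}$-localization must confine the degree-$(\geq q)$ spectral subspace to a fixed neighbourhood of $\widetilde{K}$ \emph{without} the localizing weight $\chi(\varphi\circ\pi)$ spoiling Berndtsson's local rescaling there. Second, the rescaling estimate genuinely needs \emph{semipositivity} of the curvature on that neighbourhood, so one must know $\gamma\geq 0$ on all of $\widetilde{U}$ and not merely off $\supp D$. Third, the behaviour of the floor terms $\lfloor pcD\rfloor$ under the modification, together with the vanishing of the higher direct images, has to be organised so that the family $\{B_{p}\}$ stays within the reach of the smooth estimate. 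One may instead bypass the resolution and argue directly on $M$ with the algebraic-singularity weight of $h^L$, using Nadel's and Demailly's $L^{2}$-estimates with multiplier ideals in place of the log resolution; the analytic core — localization near $K$ plus the Siu--Berndtsson rescaling — is the same.
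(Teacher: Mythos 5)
Your Step 1 is essentially the paper's own reduction: the paper runs the Bonavero-type resolution of Theorem \ref{thm_tech}, notes that $\widetilde M$ stays $q$-convex with exceptional set $\widetilde D\cup\pi^{-1}(K)\subset\pi^{-1}(U)$, extends the semipositivity of the smooth curvature $c_1(\widehat L,h^{\widehat L})$ across the exceptional divisor by continuity, and transfers the cohomology with multiplier ideals via \eqref{e:decomp} and \eqref{e:isoms}; your bookkeeping with $\lfloor pcD\rfloor=pcD-\{pcD\}$ and a fixed residue class of $p$ is the same device as the paper's $p=mp'+m'$ with the finitely many twists $F_{m'}$. The only real divergence is Step 2: where the paper simply invokes the smooth-metric Berndtsson-type estimate on $q$-convex manifolds (Theorem \ref{thm_wk_est}, i.e.\ \cite[Theorem 1.5]{Wh:21b}) applied to $(\widehat L,h^{\widehat L})$ and $F_{m'}$, you attempt to re-derive that estimate from scratch (spectral localization near the exceptional set plus Siu--Berndtsson rescaling). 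That sketch is directionally right but is exactly where all the analytic difficulty sits, and, as you yourself flag under ``Main obstacle,'' the uniformity of the localization and the compatibility of the convex weight with the rescaling are not actually established there; representing $H^j$ by harmonic forms on the non-compact $\widetilde M$ also needs the Andreotti--Grauert/sublevel-set machinery you only allude to. Since the needed statement is precisely the quoted theorem of Wang, the cleaner (and the paper's) route is to cite it as a black box; with that substitution your argument coincides with the paper's proof, and the remaining small discrepancy (the isomorphism \eqref{e:isoms} holds for $p$ large, not all $p\geq1$) is harmless because the finitely many remaining dimensions are finite by Andreotti--Grauert and can be absorbed into $C$.
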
     

When $M$ is compact (hence $1$-convex), Theorem \ref{thm_qcvx_bb} 
reduces to \cite[Theorem 1.7]{Wh:21a}, and when 
$S(h^L)=\emptyset$ to \cite[Theorem 1.5]{Wh:21b}. 
When $M$ is compact and $S(h^L)=\emptyset$ this
is of course Berndtsson's result \cite{Bern:02}.

The paper is organized as follows. 
In Section \ref{S:sec2} we introduce the notations and 
recall how to reduce the case of metrics with algebraic 
singularities to that of smooth metrics, following \cite{Bo:93}. 
The main results are proved in Section \ref{S:hmi}.

\medskip
\textit{This paper is dedicated to the memory of Mihnea Col\c{t}oiu, for
his many fundamental contributions to the convexity theory of
complex spaces, brilliant solutions to difficult open problems and his
inspiring mathematical personality.
He will be fondly remembered.}

\section{Reduction to the case of smooth metrics}\label{S:sec2}    

The proof of the singular Morse inequalities follows the methods of 
Bonavero \cite{Bo:93} (see also \cite[Section 2.3.2]{MM07}). 
That is, one uses a proper modification such that the pull-back 
of the curvature current of $h^L$ has singularities along 
a divisor with normal crossings. Then one introduces a modified 
Hermitian holomorphic line bundle with cohomology groups isomorphic to the 
original ones. Since the singular locus of $h^L$ is compact, 
the convexity of the ambient manifold is preserved 
by the proper modification, which allows us to reduce 
the singular case to the smooth one in Section \ref{S:hmi}. 
We recall this construction in Theorem \ref{thm_tech} and give 
an outline of its proof. We begin with a brief discussion of singular Hermitian metrics.

\subsection{Singular metrics with algebraic singularities}\label{sec_algsing} 

Let $M$ be a connected complex manifold of dimension $n$ and 
$\cO_M$ denote its structure sheaf. 
Let $\omega$ be a Hermitian form on $M$ and set $dv_{M}=\omega^n/n!$. 
We denote by $H^q(M,\cF)$, where $0\leq q\leq n$, 
the $q$-th cohomology group of a sheaf $\cF$ on $M$. 
If $F$ is a holomorphic vector bundle on $M$ and $\cO_M(F)$ 
is the sheaf of holomorphic sections of $F$ we set $H^q(M,F):=H^q(M,\cO_M(F))$. 

A function $\varphi:M\rightarrow [-\infty,+\infty)$ which 
is locally the sum of a plurisubharmonic (psh) function and a smooth function
is called quasi-plurisubharmonic (quasipsh). 

Let $L$ be a holomorphic line bundle on $M$ and $h^L_0$ be a 
smooth Hermitian metric on $L$. If $h^L$ is a singular Hermitian metric 
on $L$ (cf.\ \cite{Dem:92,MM07}) then $h^L=h^L_0e^{-2\varphi}$ for some real function 
$\varphi\in L^1_{\loc}(M)$. 
The curvature currents of $(L,h^L)$ are defined by
\[R^{(L,h^L)}=R^{(L,h^L_0)}+
2\dbar\ddbar\varphi,\quad c_1(L,h^L)=\frac{i}{2\pi}\,R^{(L,h^L)}=
c_1(L,h^L_0)+\ddc\varphi.\]
We denote by $R(h^L)$ the largest open subset of $M$ 
where $h^L$ (or equivalently $\varphi$) is smooth, 
and we call $S(h^L):=M\setminus R(h^L)$ the singular locus of $h^L$. 


We introduce the following important class of
singular Hermitian metrics, cf.\ \cite{Bo:93, Bouck:02,Dem_analmeth_2012} 

\begin{defn}\label{D:algsing}
A function $\varphi$ on $M$ is said to
have \emph{analytic singularities} if there exists a coherent ideal sheaf $\cI\subset\cO_M$
and $c>0$ such that $\varphi$ can be written locally as 
\begin{equation}\label{e:algsing}
\varphi=\frac{c}{2}\,\log\big(\sum_{j=1}^m|f_j|^2\big)+\psi,
\end{equation}
where $f_1,\ldots,f_m$ are local generators of the ideal sheaf $\cI$ and $\psi$ is a smooth function. 
If $c$ is rational, we furthermore say that $\varphi$ has
\emph{algebraic singularities}. Note that a function
with analytic singularities is quasipsh, and that its singular locus
is the support of the subscheme $V(\cI)$ defined by $\cI$. If $L$
is a holomorphic line bundle on $X$ and $h^L$ is
a singular Hermitian metric on $L$, written $h^L=h^L_0e^{-2\varphi}$
where $h^L_0$ is smooth,
we say that $h^L$ has analytic (resp.\ algebraic) singularities if
$\varphi$ has analytic (resp.\ algebraic) singularities.
\end{defn}

The following notion was introduced by Nadel \cite{Nad:90}, see also
\cite{Dem:92,Dem_analmeth_2012}.
\begin{defn}\label{D:NMI}
The Nadel multiplier ideal sheaf $\cI(\varphi)$ of a real locally
integrable function $\varphi$ on $M$
is the sheaf of germs of holomorphic functions $f$ such
that $|f|^2e^{-2\varphi}$ is locally integrable.
We denote by $\cI(h^L):=\cI(\varphi)$  
the Nadel multiplier ideal sheaf of $h^L=h^L_0e^{-2\varphi}$. 
\end{defn}

Clearly, the ideal sheaf $\cI(h^L)$ is independent 
on the choice of the Hermitian metric $h^L_0$. Since the 
Nadel multiplier ideal sheaf $\cJ(\varphi)$ of a psh (thus also quasipsh)
function $\varphi$ is coherent \cite{Nad:90} (cf.\ also 
\cite[(5.7) Proposition]{Dem:92,Dem_analmeth_2012}), 
it follows that $\cJ(h^L)$ is a coherent analytic sheaf on $M$
for any Hermitian metric $h^L$ 
with analytic singularities.

\subsection{Resolving algebraic singularities}\label{sec_teck}     

Let $M$ be a connected complex manifold of dimension $n$. 
We need the following theorem about the resolution of singularities 
(see \cite[Theorems 3, 4, 5.4.2]{AHV:18} and 
\cite[Theorem 1.10, 13.4]{BieMil:97}). 

\begin{thm}\label{thm_key}
Let $\cF$ be a coherent ideal sheaf on $M$ such that 
\[Y:=\supp(\cO_M/\cF)=\{ x\in M:\cF_x\neq\cO_{M,x}\}\] 
is compact. Then there exits a complex manifold $\widetilde{M}$ 
and a proper modification $\pi: \til M\rightarrow M$, 
given as the composition of finitely many blow-ups with smooth center, such that:	 

(i) the restriction $\pi:\til M\setminus \pi^{-1}(Y)\rightarrow M\setminus Y$ is biholomorphic;

(ii) the pullback $\til \cF=\pi^{-1}\cF$ is locally normal crossings everywhere in $\til M$, i.e., for every point $x\in \til M$ there exits a coordinate neighborhood $W$ centered at $x$ and a monomial $h\in \cO_{\til M}(W)$ such that $\til\cF(W)$ is the principal ideal generated by $h$.
\end{thm}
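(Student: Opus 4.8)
The statement to be proved is Theorem~\ref{thm_key}, the resolution-of-singularities result: given a coherent ideal sheaf $\cF$ on a complex manifold $M$ with compact support $Y$, one wants a proper modification $\pi\colon\til M\to M$, built from finitely many blow-ups with smooth centers, that is biholomorphic over $M\setminus Y$ and makes $\pi^{-1}\cF$ locally principal generated by a monomial.

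\medskip

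\textbf{Plan.} The point is that this is essentially a citation to Hironaka-type embedded resolution in the analytic category, and the work consists in (a) invoking the correct global statement over a (possibly non-compact) complex manifold, and (b) exploiting compactness of $Y$ to patch and to ensure the modification is trivial away from $Y$. First I would recall the principalization theorem for ideal sheaves on a smooth analytic space: for a coherent ideal $\cF\subset\cO_M$ there is a locally finite sequence of blow-ups along smooth centers contained in the successive total transforms of $Y$, after which the pulled-back ideal becomes locally the ideal of a normal crossings divisor, i.e.\ locally generated by a monomial in suitable coordinates. This is the content of the cited results of Aroca–Hironaka–Vicente \cite{AHV:18} (Theorems 3, 4 and 5.4.2) and of Bierstone–Milman \cite{BieMil:97} (Theorem 1.10 together with the algorithmic description around 13.4); in the analytic setting one must use the \emph{canonical} (functorial) version of the algorithm so that the local resolutions over coordinate charts glue to a global modification. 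I would state this as the first step, citing the above.

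\medskip

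\textbf{Using compactness.} The second step is where compactness of $Y=\supp(\cO_M/\cF)$ enters. Away from $Y$ the ideal $\cF$ is already the unit ideal $\cO_M$, so each blow-up center prescribed by the canonical algorithm is contained in (the total transform of) $Y$; in particular the algorithm does nothing over $M\setminus Y$. Since $Y$ is compact, after finitely many steps the algorithm terminates over a relatively compact neighborhood of $Y$, and by functoriality these finitely many blow-ups extend by the identity over the rest of $M$. Concretely: choose a relatively compact open $M'\Supset Y$; apply the (finite, because $M'$ is relatively compact and $Y$ compact) canonical principalization sequence to $\cF|_{M'}$; since all centers lie over $Y$, each blow-up $\mathrm{Bl}_{C}M'$ with $C$ closed in $M'$ and $C\subset Y$ glues with $M\setminus \ol{C}\supset M\setminus M'$ to a blow-up of $M$ with the same center $C\subset M$; composing, one obtains $\pi\colon\til M\to M$, a finite composition of smooth-center blow-ups, biholomorphic over $M\setminus Y$ (hence (i)), and with $\pi^{-1}\cF$ locally monomial over $M'$. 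Over $M\setminus Y$, $\pi^{-1}\cF=\pi^{-1}\cO_M=\cO_{\til M}$ is trivially the principal ideal generated by the monomial $1$, so property (ii) holds everywhere in $\til M$, not just over $M'$.

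\medskip

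\textbf{Main obstacle.} The routine calculations — verifying that a blow-up with a relatively compact center glues to a global blow-up, that ``locally normal crossings'' is exactly ``locally principal with monomial generator'', and that the total transforms of $Y$ remain compact at each stage — are straightforward. The genuine content, which I would treat as a black box via the cited references, is the \emph{existence} of the canonical embedded resolution / principalization algorithm in the complex-analytic category with its functoriality under open embeddings; this functoriality is precisely what makes the local-to-global patching and the ``identity away from $Y$'' assertion work. So the only real ``step'' to be careful about is to quote the analytic (not merely algebraic) and functorial form of Hironaka's theorem, and then the compactness of $Y$ reduces everything to a finite composition with the stated properties. I would therefore present the proof as: (1) state canonical principalization from \cite{AHV:18, BieMil:97}; (2) restrict to $M'\Supset Y$, note finiteness and that all centers lie over $Y$; (3) glue each blow-up to $M$ and compose to get $\pi$; (4) conclude (i) and (ii), the latter using that $\pi$ is trivial — and $\pi^{-1}\cF=\cO$ — over $M\setminus Y$.
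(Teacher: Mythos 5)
Your proposal is correct and follows essentially the same route as the paper, which states Theorem~\ref{thm_key} purely as a citation to \cite[Theorems 3, 4, 5.4.2]{AHV:18} and \cite[Theorem 1.10, 13.4]{BieMil:97} without further argument. Your additional remarks on functoriality of the canonical algorithm, the compactness of $Y$ forcing finiteness of the blow-up sequence, and the gluing of blow-ups with compact centers simply make explicit what the paper leaves implicit in that citation.
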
   

The holomorphic Morse inequalities for a singular metric are 
obtained from the corresponding inequalities for a suitable smooth 
metric by the following theorem. We denote by $ \lfloor a\rfloor$ 
the integer part of $a\in \R$. 
  
 \begin{thm}\label{thm_tech}
Let $(L,h^L)$ be a holomorphic line bundle on $M$ such that $h^L$ has algebraic singularities as in \eqref{e:algsing} and $S(h^L)$ is compact. There exists a proper modification $\pi:\til M\rightarrow M$, given by a composition of finitely many blow-ups with smooth center, such that $\pi:\til M\setminus \pi^{-1}(S(h^L))\rightarrow M\setminus S(h^L)$ is biholomorphic and the following hold: 
 		
(A) The weight $\til \varphi=\varphi\circ\pi$ of the metric $h^{\til L}= \pi^* h^L=(\pi^* h^L_0)e^{-2\til\varphi}$ on $\til L= \pi^* L$ has the form 
 		\begin{equation}\label{e:tilph}
 		\til \varphi=c\sum_{j=1}^k c_j\log |g_j|+\til \psi 
 		\end{equation}
in local holomorphic coordinates at any given point $\til x$ in $\til M$, where $\til \psi$ is a smooth function, $c_j\in\N\setminus \{0\}$, $g_j$ are irreducible in $\cO_{\til M,\til x}\,$, and they define a global divisor $\sum_{j=1}^k c_j\til D_j$ that has only normal crossings and support $\pi^{-1}(S(h^L))$. Moreover, 
\begin{equation}\label{e:nlp}
\cI(h^{\til L^p})=\cI(p\til \varphi)=
\cO_{\til M}\big(-\sum_{j=1}^k \lfloor\, pcc_j\rfloor\,\til D_j\big), \text{ for all } p\geq1.
\end{equation}

(B) Let $c=r/m$, where $r,m$ are positive integers, and set 
\begin{equation}\label{e:dvd}
\til D:=r \sum_{j=1}^k c_j \til D_j\,,\,\; \oh L:=\til L^m\otimes\cO_{\til M}(-\til D)=\til L^m\otimes \cI(h^{\til L^m}).
\end{equation}
Then there exists a smooth Hermitian metric $h^{\widehat L}$ on $\widehat L$ such that 
\begin{equation}\label{e:c1}
c_1{(\oh L, h^{\oh L})}=mc_1{(\til L, h^{\til L})} \text{ on } \til M\setminus \til D.
\end{equation}

(C) If $E$ is a holomorphic vector bundle on $M$ then, for $p$ sufficiently large, we have 
\begin{align}
H^j(M,  L^p\otimes E\otimes K_M\otimes \cI(h^{L^p}))
&\cong H^j(\til M,\til L^p\otimes \til E\otimes K_{\til M}\otimes\cI(h^{\til L^p})),\label{e:isom}\\
H^j(M,  L^p\otimes E\otimes \cI(h^{L^p}))
&\cong H^j(\til M,\til L^p\otimes \til E\otimes \til{K_M^*}\otimes K_{\til M}\otimes \cI(h^{\til L^p})),\label{e:isoms}
\end{align}
for $0\leq j\leq n$, where $\til E=\pi^* E$ and $\til{K_M^*}=\pi^*(K_M^*)$.
\end{thm}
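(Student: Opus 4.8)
The plan is to construct $\pi$ by applying Theorem \ref{thm_key} to the ideal sheaf $\cI$ appearing in the algebraic singularity data \eqref{e:algsing}, whose support is exactly $S(h^L)$ and hence compact by hypothesis. First I would verify that after the resulting composition of blow-ups with smooth centers, the pulled-back ideal $\til\cF$ is locally principal and generated by a monomial; writing this monomial as $\prod g_j^{c_j}$ in local coordinates, with the $g_j$ irreducible and defining smooth hypersurfaces meeting transversally, gives the global normal-crossings divisor $D=\sum c_j\til D_j$ with support $\pi^{-1}(S(h^L))$. Pulling back \eqref{e:algsing} then yields \eqref{e:tilph}: the pullback of $\frac{c}{2}\log\sum|f_j|^2$ becomes $c\sum c_j\log|g_j|$ plus a smooth term (the pullback of $\psi$ together with the log of a smooth positive function coming from the change of generators). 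The multiplier ideal computation \eqref{e:nlp} is then the standard local fact that for $\til\varphi$ with normal-crossings singularities, $|h|^2 e^{-2p\til\varphi}$ is locally integrable iff $h$ vanishes to order at least $\lfloor pcc_j\rfloor$ along each $\til D_j$ — an elementary polar-coordinates integrability check on a polydisk; this identifies $\cI(p\til\varphi)$ with $\cO_{\til M}(-\sum\lfloor pcc_j\rfloor\til D_j)$.

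For part (B), with $c=r/m$ I would note that $m\til\varphi = r\sum c_j\log|g_j|^m + m\til\psi$ has its singular part equal to the weight of the canonical singular metric on $\cO_{\til M}(\til D)$, $\til D = r\sum c_j\til D_j$. Concretely, the line bundle $\cO_{\til M}(-\til D)$ carries a singular metric whose weight is $-\log|s_{\til D}|^2$ for the canonical section $s_{\til D}$ cutting out $\til D$; the tensor product $\oh L=\til L^m\otimes\cO_{\til M}(-\til D)$ then inherits from $(\pi^*h^L_0)^m$ and this metric a metric whose combined weight is $m\til\varphi - (\text{weight of }s_{\til D}) = m\til\psi + (\text{smooth})$, which is smooth on all of $\til M$ because the log-pole terms cancel identically. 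This defines $h^{\oh L}$, and since over $\til M\setminus\til D$ the bundle $\cO_{\til M}(-\til D)$ is trivialized by $s_{\til D}$ with curvature zero there, \eqref{e:c1} follows: $c_1(\oh L,h^{\oh L}) = m\,c_1(\til L,h^{\til L})$ off $\til D$. The identity $\cO_{\til M}(-\til D)=\cI(h^{\til L^m})$ is just the $p=m$ case of \eqref{e:nlp} since $\lfloor mcc_j\rfloor = \lfloor rc_j\rfloor = rc_j$.

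For part (C), the key is that $\pi$ is biholomorphic away from the compact set $S(h^L)$, so pushforward and pullback along $\pi$ interact well with the relevant sheaves. The direct-image argument runs as follows: by \eqref{e:nlp} the sheaf $\til L^p\otimes\til E\otimes K_{\til M}\otimes\cI(h^{\til L^p})$ equals $\pi^*(L^p\otimes E)\otimes K_{\til M}\otimes\cO_{\til M}(-\sum\lfloor pcc_j\rfloor\til D_j)$, and the projection formula together with the fact that for a composition of blow-ups with smooth centers $R^q\pi_*(K_{\til M}\otimes\cO_{\til M}(-F))=0$ for $q>0$ and $\pi_*(K_{\til M}\otimes\cO_{\til M}(-F)) = K_M\otimes\cI$, where $\cI$ is the multiplier ideal of the corresponding singular weight on $M$ — this is the relative Grauert--Riemenschneider / Takegoshi-type vanishing for the canonical bundle, combined with the birational transformation rule for multiplier ideals. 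Since $\cI(h^{L^p})=\cI(p\varphi)$ is exactly the pushforward of $\cO_{\til M}(-\sum\lfloor pcc_j\rfloor\til D_j)$ twisted by the relative canonical data, the Leray spectral sequence degenerates and gives \eqref{e:isom}. For \eqref{e:isoms} one performs the same computation but twists by $\pi^*(K_M^*)\otimes K_{\til M}=\cO_{\til M}(K_{\til M/M})$, the relative canonical bundle, which is effective and $\pi$-exceptional; the higher direct images still vanish and the pushforward of $\cO_{\til M}(K_{\til M/M}-\sum\lfloor pcc_j\rfloor\til D_j)$ is again $\cI(h^{L^p})$, yielding \eqref{e:isoms}. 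The main obstacle is bookkeeping in part (C): one must check carefully that the combination of exceptional divisors produced by the blow-ups, the relative canonical divisor $K_{\til M/M}$, and the rounding-down floors $\lfloor pcc_j\rfloor$ fit together so that the pushforward is precisely $\cI(h^{L^p})$ (and not a larger or smaller ideal), and that the requisite higher-direct-image vanishing holds on the possibly non-compact $M$ — for which one uses that $\pi$ is an isomorphism outside the compact $S(h^L)$, so the vanishing is a local statement near $S(h^L)$ and reduces to the known compact/local case.
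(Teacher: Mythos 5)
Your proposal is correct and follows essentially the same route as the paper: resolve the ideal sheaf $\cI$ via Theorem \ref{thm_key}, factor the monomial generator into irreducibles to get the normal-crossings divisor and the multiplier-ideal formula \eqref{e:nlp}, build $h^{\oh L}$ from the canonical-section metric on $\cO_{\til M}(-\til D)$ so that the log poles cancel, and obtain (C) by pushing forward along $\pi$ with vanishing higher direct images and the Leray spectral sequence (the paper cites Bonavero's local argument using Nadel vanishing on weakly $1$-complete neighborhoods, which is the same mechanism as your relative-vanishing/transformation-rule sketch). Only a cosmetic slip: in (B) the singular part of $m\til\varphi$ is $r\sum_{j} c_j\log|g_j|$, not $r\sum_{j} c_j\log|g_j|^m$.
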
 

\begin{proof}
{\em (A)} 
Let us apply the Theorem \ref{thm_key} for the coherent ideal
sheaf $\cI$ from the Definition \ref{D:algsing}.
Let $\pi: \til M\rightarrow M$ be as in Theorem \ref{thm_key}
and let $g$ be the local generator of the ideal $\pi^{-1}\cI$
in a neighborhood of a point $\widetilde{x}\in\widetilde{M}$.
Let $\{f_j:j=1,\ldots,m\}$ be generators of $\cI$ near 
$x=\pi(\widetilde{x})$.
Since $\{f_j\circ\pi:j=1,\ldots,m\}$ are generators of $\pi^{-1}\cF$
near $\widetilde{x}$ there exists holomorphic functions $h_j$
such that $f_j\circ \pi=gh_j$, where $h_j$ have no common zeros. 
It follows that 
\[ \til \varphi=\frac{c}{2}\log\big(\sum_{j=1}^m|f_j\circ  \pi|^2\big)
+\psi\circ\pi=c\log|g|+\til\psi,\]
where $\til\psi$ is smooth. We write $g=\prod_{j=1}^k g_j^{c_j}$ 
with $g_j$ irreducible factors, and consider the global divisors $\til D_j$ 
defined locally by $g_j$. Hence $\sum_{j=1}^k c_j\til D_j$ 
is a divisor with only normal crossings. 
Since $\til\psi$ is smooth, this implies \eqref{e:nlp}.
\smallskip

\noindent{\em (B)} Let $s_{\til D}$ be the canonical section of $\cO_{\til M}(-\til D)$ such that $\Div(s_{\til D})=-\til D$. We endow $\cO_{\til M}(-\til D)$ with a singular metric $h^{\til D}$ such that $|s_{\til D}|_{h^{\til D}}=1$ on $\til M\setminus \til D$, and we consider the metric  $h^{\oh L}=h^{\til L^m}\otimes h^{\til D}$ on $\widehat L$. Since the local weight of $h^{\til D}$ is $-r\sum_{j=1}^k c_j\log|g_j|$, we infer from \eqref{e:tilph} that $h^{\oh L}$ is smooth and \eqref{e:c1} holds.

\smallskip

\noindent{\em (C)} This follows by the same local arguments 
as in \cite{Bo:93} (see also \cite[pp.\ 106-109, (2.3.45)]{MM07}),
by using the Leray theorem about the cohomology of a direct image
of a sheaf and the Nadel vanishing theorem for
weakly 1-complete manifolds. 
We emphasize that it is essential here that the proper modification 
$\pi$ is the composition of finitely many blow-ups with smooth center. 
Note that \eqref{e:isoms} follows at once from \eqref{e:isom}.
\end{proof}

\section{Singular Morse inequalities}\label{S:hmi}   

This section is devoted to the proofs of our main results.
We state first the singular holomorphic Morse inequalities
for compact manifolds due to Bonavero \cite{Bo:93}.

\begin{thm}
\label{C:Bona}
Let $M$ be a compact complex manifold of dimension $n$, 
and $L,E$ be holomorphic vector bundles on $X$ with $\rank(L)=1$. 
Let $h^L$ be a Hermitian metric on $L$ with algebraic singularities. 
Then, for $0\leq q\leq n$, we have as $p\rightarrow \infty$ that
\begin{gather}
\dim H^q(M,L^p\otimes E\otimes\cI(h^{L^p}))\leq
\rank(E)\,\frac{p^n}{n!}\int_{M(q)}(-1)^q c_1(L,h^L)^n+o(p^n),
\label{eq:wm1}\\
\sum_{j=0}^q(-1)^{q-j}\dim H^j(M,L^p\otimes E\otimes\cI(h^{L^p}))\leq 
\rank(E)\,\frac{p^n}{n!}\int_{M(\leq q)}(-1)^q c_1(L,h^L)^n+o(p^n),
\label{eq:sm}	
\end{gather}
with equality for $q=n$ (the asymptotic Riemann-Roch-Hirzebruch 
formula for singular metrics). 
\end{thm}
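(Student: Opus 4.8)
The plan is to deduce Theorem~\ref{C:Bona} from the smooth holomorphic Morse inequalities by means of the reduction in Theorem~\ref{thm_tech}; this is essentially Bonavero's argument \cite{Bo:93}. Write $h^L=h^L_0e^{-2\varphi}$ with $\varphi=\frac{c}{2}\log\big(\sum_j|f_j|^2\big)+\psi$ as in \eqref{e:algsing} and $c=r/m$, $r,m\in\N\setminus\{0\}$. Since $M$ is compact, $S(h^L)$ is compact, so Theorem~\ref{thm_tech} applies and furnishes a proper modification $\pi\colon\til M\to M$ with $\til M$ compact, the divisor $\til D=r\sum_{j=1}^kc_j\til D_j$, the line bundle $\oh L=\til L^m\otimes\cO_{\til M}(-\til D)$, and a smooth Hermitian metric $h^{\oh L}$ on $\oh L$ with $c_1(\oh L,h^{\oh L})=m\,c_1(\til L,h^{\til L})$ on $\til M\setminus\til D$.

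First I would sort the powers of $L$ by residues modulo $m$. Writing $p=km+i$ with $0\leq i\leq m-1$, one has $\lfloor pcc_j\rfloor=krc_j+\lfloor irc_j/m\rfloor$, so \eqref{e:nlp} and \eqref{e:dvd} give an isomorphism of holomorphic line bundles on $\til M$,
\[
\til L^p\otimes\cI(h^{\til L^p})\;\cong\;\oh L^{\,k}\otimes\big(\til L^i\otimes\cI(h^{\til L^i})\big),
\]
the point being that $\til L^i\otimes\cI(h^{\til L^i})=\til L^i\otimes\cO_{\til M}\big(-\sum_j\lfloor irc_j/m\rfloor\,\til D_j\big)$ is again invertible. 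Combining this with \eqref{e:isoms}, for each $i\in\{0,\dots,m-1\}$ I obtain a holomorphic vector bundle $F_i:=\til E\otimes\til{K_M^*}\otimes K_{\til M}\otimes\til L^i\otimes\cI(h^{\til L^i})$ on the compact manifold $\til M$, with $\rank F_i=\rank E$, together with isomorphisms
\[
H^j\big(M,L^p\otimes E\otimes\cI(h^{L^p})\big)\;\cong\;H^j\big(\til M,\oh L^{\,k}\otimes F_i\big),\qquad 0\leq j\leq n,
\]
valid for all $p=km+i$ large enough.

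Next I would apply the smooth holomorphic Morse inequalities with values in a vector bundle (\cite[Chapter~3]{MM07}; see also \cite{De:85}) on the compact manifold $\til M$ to the positive-power family $\oh L^{\,k}\otimes F_i$, $k\to\infty$, and translate the estimates back to $M$. Since $h^{\oh L}$ is smooth on all of $\til M$, $c_1(\oh L,h^{\oh L})=m\,c_1(\til L,h^{\til L})$ off $\til D$, and $\til D$ has Lebesgue measure zero, one gets $\int_{\til M(q,h^{\oh L})}c_1(\oh L,h^{\oh L})^n=m^n\int_{\til M(q,h^{\til L})}c_1(\til L,h^{\til L})^n$; and since $\pi$ restricts to a biholomorphism of $\til M\setminus\pi^{-1}(S(h^L))$ onto $M\setminus S(h^L)$ carrying $(\til L,h^{\til L})$ to $(L,h^L)$ and $\til M(q,h^{\til L})$ onto $M(q)$, the latter integral equals $\int_{M(q)}c_1(L,h^L)^n$. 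As $k^n=(p-i)^n/m^n=p^n/m^n+O(p^{n-1})$, this converts the estimate on $\til M$ into $\rank E\,\frac{p^n}{n!}\int_{M(q)}(-1)^qc_1(L,h^L)^n+o(p^n)$, and likewise with $M(q)$ replaced by $M(\leq q)$ in the strong form; taking the largest of the $m$ residue-class error terms gives a uniform $o(p^n)$ and yields \eqref{eq:wm1} and \eqref{eq:sm}. For $q=n$ the left side of \eqref{eq:sm} is $(-1)^n\chi\big(M,L^p\otimes E\otimes\cI(h^{L^p})\big)=(-1)^n\chi\big(\til M,\oh L^{\,k}\otimes F_i\big)$, and the Riemann--Roch--Hirzebruch theorem on $\til M$ gives $\chi\big(\til M,\oh L^{\,k}\otimes F_i\big)=\rank E\,\frac{k^n}{n!}\int_{\til M}c_1(\oh L,h^{\oh L})^n+O(p^{n-1})=\rank E\,\frac{p^n}{n!}\int_{M(\leq n)}c_1(L,h^L)^n+o(p^n)$, the integral being finite since it equals $m^{-n}\int_{\til M}c_1(\oh L,h^{\oh L})^n$; this is the asserted equality.

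The substantial input is Theorem~\ref{thm_tech} itself: resolving the algebraic singularities by finitely many blow-ups with smooth center (Theorem~\ref{thm_key}) while keeping $\til M$ compact, normalizing the pulled-back weight to the form \eqref{e:tilph}, and above all the sheaf-theoretic identification \eqref{e:isoms} of the cohomology on $M$ with values in $L^p\otimes E\otimes\cI(h^{L^p})$ with honest cohomology on $\til M$, via the Leray spectral sequence for $\pi$ and Nadel vanishing. Granting that, the only remaining care is bookkeeping: splitting off the invertible sheaf $\til L^i\otimes\cI(h^{\til L^i})$ so the exponent advances in steps of $m$, checking that all the auxiliary twisting bundles have rank $\rank E$, matching index sets and curvature integrals across $\pi$ up to the measure-zero divisor $\til D$, and absorbing the finitely many residue-class error terms into a single $o(p^n)$. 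I do not expect any serious obstacle beyond Theorem~\ref{thm_tech}.
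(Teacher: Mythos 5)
Your proposal is correct and takes essentially the same route as the paper: the theorem is quoted from Bonavero, and the paper's own reduction scheme (Theorem \ref{thm_tech} with the decomposition \eqref{e:decomp}, the twisting bundles $F_{m'}$ of \eqref{e:remainder}, the cohomology isomorphism \eqref{e:isoms}, and the curvature-integral identification via \eqref{e:c1} and the remark that $\pi^{-1}(M(q))=\til M(q)\setminus\til D$) is exactly the argument you carry out, applying the smooth holomorphic Morse inequalities and Riemann--Roch--Hirzebruch on the compact blow-up $\til M$ to the family $\oh L^{k}\otimes F_i$. I see no gaps.
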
   
As a consequence,
\begin{equation}\label{eq:wm2}
\dim H^q(M, L^p\otimes  E\otimes \cI(h^{ L^p}))\geq 
\rank(E)\,\frac{p^n}{n!}
\int_{M(q-1)\cup M(q)\cup M(q+1)}(-1)^q c_1(L,h^L)^n+o(p^n).
\end{equation}
Moreover, for if for some $0\leq q\leq n$ we have 
$M(q-1)=M(q+1)=\emptyset$, then
\begin{equation}\label{eq:wm23}
\lim_{p\rightarrow \infty}n!\,p^{-n}
\dim H^q(M,L^p\otimes  E\otimes \cI(h^{ L^p}))=
\rank(E)\int_{M(q)}(-1)^q c_1(L,h^L)^n.
\end{equation}
By the notation $\int_{M(q)}c_1(L,h^L)^n$ we also assume
that the set $M(q)$ refers to the metric $h^L$, that is,
$M(q)=M(q,h^L)$ (cf.\ \eqref{eq:mq}). 
Note that the integrals on the right-hand side 
of the Morse inequalities are finite. By \eqref{e:c1} we have
$\pi^{-1}(M(\ell))=\til M(\ell)\setminus\til D$
and the integral of $c_1(L,h^L)^n$ on $M(\ell)$
equals the integral of the everywhere smooth form $m^{-n}c_1(\oh L,h^{\oh L})^n$
on $\til M(\ell)\setminus\til D$.

In the following we consider manifolds $M$ satisfying various
convexity conditions (such as $q$-convexity, weakly $1$-completeness, $q$-concavity,
hyperconcavity). For a coherent analytic sheaf $\cF$ on such manifold $M$
the cohomology spaces
$H^j(M,\cF)$ are finite dimensional only for some values of $j\in\{0,1,\ldots,n\}$
and the Morse inequalities hold for these values, but also for
some connected values (for example $j=0$).

\subsection{$q$-convex manifolds}\label{SS:qc}
According to \cite{AG:62}, a complex manifold $M$ of dimension $n$ 
is called \emph{$q$-convex} for some $q\in\{1,\ldots,n\}$
if there exists a smooth function $\varphi:M\longrightarrow [a,b)$, 
where $a\in\R$, $b\in\R\cup\{+\infty\}$, 
such that $M_c=\{\varphi<c\}\Subset M$ for all $c\in[a,b)$ and 
$i\partial\ddbar \varphi$ has at least $n-q+1$ positive eigenvalues 
on $M\setminus K$
for a compact subset $K\subset M$. We call $K$ the {\em exceptional set} of $M$. 
By the Andreotti-Grauert theory \cite{AG:62}, $H^j(X,\cF)$ is finite dimensional
for any $j\geq q$ and any coherent analytic sheaf $\cF$ on a $q$-convex manifold $X$.


The smooth version of the holomorphic Morse inequalities for $q$-convex 
manifolds is the following, see \cite[Theorem 0.1]{Bou:89},
\cite[Theorem 3.5.8]{MM07}.

\begin{thm}\label{lem_qhmi} 
Let $q,s\in \N$, $1\leq q,s\leq n$, $M$ be a 
$q$-convex manifold of dimension $n$, and $L,E$ 
be holomorphic vector bundles on $M$ with $\rank(L)=1$. 
Let $h^L$ be a Hermitian metric on $L$ such that $c_1(L,h^L)$ 
has at least $n-s+1$ non-negative eigenvalues on $M\setminus Z$ 
for some compact $Z\subset M$. Then for any $\ell\geq s+q-1$, 
the following strong Morse inequality holds as $p\rightarrow \infty$,
\[\sum_{j=\ell}^n(-1)^{j-\ell}\dim H^j(M, L^p\otimes  E)\leq 
\rank(E)\,\frac{p^n}{n!}\int_{M(\geq\ell)}(-1)^\ell c_1(L,h^L)^n+o(p^n).\]
\end{thm}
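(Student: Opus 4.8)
The plan is to deduce the inequality from $L^2$ Hodge theory for the Kodaira Laplacian $\Box_p=\ddbar\,\ddbar^{*}+\ddbar^{*}\,\ddbar$ acting on $(0,j)$-forms with values in $L^p\otimes E$, after modifying the metrics so that the curvature becomes strongly positive in enough directions near infinity. This will confine the relevant cohomology to a compact set and allow me to invoke Demailly's local heat-kernel (or Bergman-kernel) asymptotics. First I would fix the $q$-convex exhaustion $\varphi\colon M\to[a,b)$ with exceptional set $K$, set $W:=Z\cup K$ (compact), and choose a regular value $c_0$ with $W\subset\{\varphi<c_0\}=:M_{c_0}\Subset M$. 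I would equip $M$ with a complete Hermitian metric $g^{TM}$ and replace $h^L$ by $h^L_\chi:=h^Le^{-\chi\circ\varphi}$, where $\chi$ is convex increasing, constant on $(-\infty,c_0]$ and growing fast on $(c_0,b)$. Since $\chi\circ\varphi$ is constant near $W$, we have $c_1(L,h^L_\chi)=c_1(L,h^L)$ on a neighborhood of $W$, while on $\{\varphi>c_0\}$ the extra term $B:=\frac1{2\pi}i\dbar\ddbar(\chi\circ\varphi)\ge \frac{\chi'(\varphi)}{2\pi}\,i\dbar\ddbar\varphi$ contributes at least $n-q+1$ arbitrarily positive eigenvalues. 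The sheaf cohomology $H^j(M,L^p\otimes E)$ is unchanged by this modification; only the $L^2$-structure used to represent it changes.

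The threshold $\ell\ge s+q-1$ comes from an eigenvalue count. Outside $W$ the number $n_-$ of negative eigenvalues of $c_1(L,h^L)$ is at most $s-1$; and $B$, being bounded below by a non-negative multiple of a form with at least $n-q+1$ positive eigenvalues plus a non-negative rank-one term, has at most $q-1$ negative eigenvalues. By subadditivity of $n_-$ for a sum of Hermitian forms, $c_1(L,h^L_\chi)=c_1(L,h^L)+B$ has at most $s+q-2$ negative eigenvalues on $M\setminus W$. Hence $M(j,h^L_\chi)\cap(M\setminus W)=\emptyset$ for $j\ge s+q-1$, so for every $\ell\ge s+q-1$ the set $M(\ge\ell,h^L_\chi)$ is contained in $W$ and, since $\chi$ is constant near $W$, coincides there with $M(\ge\ell,h^L)\subset Z$. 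Consequently the right-hand side computed for $h^L_\chi$ equals $\rank(E)\,\frac{p^n}{n!}\int_{M(\ge\ell)}(-1)^\ell c_1(L,h^L)^n$, the quantity in the statement.

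Next I would establish the \emph{fundamental estimate} in degrees $\ge q$: there exist $C>0$ and $p_0$ so that for $p\ge p_0$ and every smooth compactly supported $(0,j)$-form $\alpha$ with values in $L^p\otimes E$, $j\ge q$,
\[
\|\alpha\|^2\le \frac{C}{p}\big(\|\ddbar\alpha\|^2+\|\ddbar^{*}\alpha\|^2\big)+C\int_W|\alpha|^2\,dv_M.
\]
This follows from the Bochner--Kodaira--Nakano formula: on $M\setminus M_{c_0}$ the curvature term acting on a $(0,j)$-form is bounded below by $p$ times $\min_{|J|=j}\sum_{k\in J}a_k$, where the $a_k$ are the eigenvalues of $c_1(L,h^L_\chi)$; since at least $n-q+1$ of them are arbitrarily positive there, every index set $J$ of size $j\ge q$ meets this block and the lower bound is $\sim p$. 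Completeness of $g^{TM}$ gives the density needed to pass to the Gaffney self-adjoint extension of $\Box_p$. With the fundamental estimate, the $L^2$ harmonic space $\mH^j(M,L^p\otimes E)$ is finite dimensional and isomorphic to $H^j(M,L^p\otimes E)$ for $j\ge q$ (Andreotti--Grauert and the $L^2$-Dolbeault isomorphism), and the small-spectrum spaces $\mH^j_{\le\lambda}$ localize to $W$. I would then apply Demailly's local heat-kernel asymptotics on $W$ to obtain $\dim\mH^j_{\le\lambda}\le \rank(E)\,\frac{p^n}{n!}\int_{M(j,h^L_\chi)}(-1)^j c_1(L,h^L_\chi)^n+o(p^n)$ for a suitable $\lambda=\lambda_p$, and finish with the standard algebraic Morse inequality applied to the finite complex $(\mH^\bullet_{\le\lambda},\ddbar)$, whose cohomology in degrees $\ge q$ is $H^\bullet(M,L^p\otimes E)$. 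Combining with the integral identity of the previous paragraph yields the claim.

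The main difficulty is analytic rather than formal: verifying the fundamental estimate with the correct degree range and a uniform-in-$p$ constant, and justifying the $L^2$-Dolbeault isomorphism identifying the harmonic spaces with the sheaf cohomology $H^j(M,L^p\otimes E)$ for $j\ge q$ on the non-compact $q$-convex manifold. Once these are in place, the localization of the small spectrum to $W$ and Demailly's heat-kernel asymptotics are routine, and the eigenvalue bookkeeping producing the threshold $\ell\ge s+q-1$ and matching the Morse integral is elementary. In practice the whole argument can be organized as a verification of the hypotheses of the general non-compact holomorphic Morse inequalities of \cite{Bou:89} and \cite[\S3.2, \S3.5]{MM07}, to which I would appeal for the spectral and heat-kernel input.
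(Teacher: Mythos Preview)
The paper does not prove this statement; it is quoted as a known result with references to \cite[Theorem~0.1]{Bou:89} and \cite[Theorem~3.5.8]{MM07}, and is then used as a black box in the proof of the singular version (Theorem~\ref{thm_hmi_q}). Your proposal is essentially an outline of the proof found in those references --- modify $h^L$ by the exhaustion, count eigenvalues to see that $M(\geq\ell,h^L_\chi)$ is confined to a compact set and agrees there with $M(\geq\ell,h^L)$, establish the fundamental estimate via Bochner--Kodaira--Nakano, localize the small spectrum, and apply Demailly's local asymptotics --- so you are aligned with the paper, and in fact more explicit than it.

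One point in your sketch needs more care than you indicate. For the fundamental estimate in degrees $j\geq q$ you argue that ``at least $n-q+1$ [eigenvalues of $c_1(L,h^L_\chi)$] are arbitrarily positive'' on $M\setminus M_{c_0}$. But multiplying $i\partial\ddbar\varphi$ by a large $\chi'(\varphi)$ also amplifies its (up to $q-1$) possibly negative eigenvalues, and the eigenvalues of $c_1(L,h^L)$ with respect to an arbitrary complete metric $g^{TM}$ need not be bounded on the non-compact region $M\setminus M_{c_0}$. In the cited references this is handled by a careful \emph{simultaneous} construction of the Hermitian metric $\omega$ on $M$ (not only of $h^L_\chi$): one builds $\omega$ so that, relative to it, the negative directions of $i\partial\ddbar\varphi$ are stretched out (making those eigenvalues small), its positive eigenvalues stay uniformly bounded below, and the eigenvalues of $c_1(L,h^L)$ remain under control outside a compact set (cf.\ the metric constructions in \cite[\S3.5]{MM07}). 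Your final paragraph rightly defers to these sources for exactly this input, so the gap is acknowledged rather than fatal; but as written the heuristic ``every $J$ of size $j\geq q$ meets the positive block, hence the sum is large'' does not follow without that additional construction.
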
 	
\begin{proof}[Proof of Theorem \ref{thm_hmi_q}]
Let $\pi:\til M\rightarrow M$ be the proper modification provided by Theorem \ref{thm_tech}. Then  $\til D=\pi^{-1}(S(h^L))\subset\pi^{-1}(Z)$ and $\pi:\til M\setminus\til D\to M\setminus S(h^L)$ is biholomorphic. So $\til M$ is also $q$-convex with exceptional set $\til D\cup\pi^{-1}(K)$, where $K$ is the exceptional set of $M$. Moreover, by \eqref{e:c1}, $c_1(\oh L,h^{\oh L})=mc_1(\til L,h^{\til L})$ has at least $n-s+1$ non-negative eigenvalues on $\til M\setminus\pi^{-1}(Z)$.

We write $p=mp'+m'$, where $p',m'\in\N$, $0\leq m'<m$. We infer by \eqref{e:nlp} and \eqref{e:dvd} that 
\begin{equation}\label{e:decomp}
\til L^p\otimes\cI(h^{\til L^p})=\oh L^{p'}\otimes\til L^{m'}\otimes\cO_{\til M}\big(-\sum_{j=1}^k \lfloor\, m'cc_j \rfloor\,\til D_j\big).
\end{equation}
Let $\til E=\pi^* E$, $\til{K_M^*}=\pi^*(K_M^*)$, and
\begin{equation}\label{e:remainder}
F_{m'}:=\til E\otimes\til{K_M^*}\otimes K_{\til M}\otimes\til L^{m'}\otimes\cO_{\til M}\big(-\sum_{j=1}^k \lfloor\, m'cc_j \rfloor\,\til D_j\big)\,,\,\;0\leq m'<m.
\end{equation}	
Then $\rank(F_{m'})=\rank(E)$.	By \eqref{e:isoms} we have, for $p=mp'+m'$ sufficiently large and for each $0\leq j\leq n$, that
\begin{equation}\label{e:cohomology}
H^j(M,  L^p\otimes E\otimes \cI(h^{L^p}))\cong H^j(\til M,\oh L^{p'}\otimes F_{m'}).
\end{equation}
Applying Theorem \ref{lem_qhmi} on $\til M$ 
to the Hermitian holomorphic line bundle $(\oh L,h^{\oh L})$ 
and to each $F_{m'}$, $0\leq m'<m$, we get for $\ell\geq s+q-1$ 
and all $p$ sufficiently large that 
\begin{align*}
\sum_{j=\ell}^n(-1)^{j-\ell}\dim H^j(M,L^p\otimes E\otimes\cI(h^{L^p}))&=
\sum_{j=\ell}^n(-1)^{j-\ell}\dim H^j(\til M,\oh L^{p'}\otimes F_{m'}) \\
&\leq\rank(E)\,\frac{(p')^n}{n!}\int_{\til M(\geq\ell)}(-1)^\ell 
c_1(\oh L,h^{\oh L})^n+o(p^n).
\end{align*}	
Since $c_1(\oh L,h^{\oh L})$ has at least $n-s+1$ non-negative 
eigenvalues on $\til M\setminus\pi^{-1}(Z)$ it follows that 
$\til M(\geq\ell)\subset\pi^{-1}(Z)$. The latter set is compact, 
so the above integral exists. 
Note that $M(\geq\ell)\subset M\setminus S(h^L)$. 
Using \eqref{e:c1} we infer that 
$\til M(\geq\ell)\setminus\til D=\pi^{-1}(M(\geq\ell))$ and 
\begin{equation}
\int_{\til M(\geq\ell)} c_1(\oh L,h^{\oh L})^n=
m^n\int_{\til M(\geq\ell)\setminus\til D} c_1(\til L,h^{\til L})^n=
m^n\int_{M(\geq\ell)} c_1(L,h^L)^n.
\end{equation}
Hence the last integral exists and we obtain
\begin{align*}
\sum_{j=\ell}^n(-1)^{j-\ell}\dim H^j(M,L^p\otimes E\otimes \cI(h^{L^p}))&
\leq\rank(E)\,\frac{(mp')^n}{n!}\int_{M(\geq\ell)} c_1(L,h^L)^n+o(p^n) \\
&=\rank(E)\,\frac{p^n}{n!}\int_{M(\geq\ell)} c_1(L,h^L)^n+o(p^n)
\end{align*}
The inequality \eqref{eq_hmi_qw} follows by summing up 
\eqref{eq_hmi_q} for $\ell$ and $\ell+1$. 
The proof of Theorem \ref{thm_hmi_q} is complete.
\end{proof}   

    
We conclude this section with the proof of Theorem \ref{thm_qcvx_bb}. 
We need the following result.
	
\begin{thm}[{\cite[Theorem 1.5]{Wh:21b}}]\label{thm_wk_est}
		Let $M$ be a $q$-convex manifold and $L,E$ be holomorphic line bundles on $M$. Let $h^L$ be a Hermitian metric on $L$ such that $c_1(L,h^L)\geq 0$ on a neighborhood $U$ of the exceptional set $K$ of $M$. Then there exists $C>0$ such that for every $j\geq q$ and $p\geq 1$,
		$\dim H^{j}(M,L^p\otimes E)\leq Cp^{n-j}$. 
\end{thm}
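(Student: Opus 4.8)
The plan is to prove the estimate by Hodge theory for the Kodaira Laplacian, upgrading the holomorphic Morse inequalities on $q$-convex manifolds (Theorem \ref{lem_qhmi}, which gives only $o(p^n)$) to the sharp order $p^{n-j}$ by means of Berndtsson's rescaling trick. First I would fix the $q$-convex exhaustion $\rho:M\to[a,b)$, a Hermitian metric $\omega$ on $M$, and a sufficiently convex weight $\chi$, and work in the Hilbert space of $L^2$ forms of type $(0,j)$ with values in $L^p\otimes E$ for the metrics $h^{L^p}$, $h^E$, $\omega$ together with the auxiliary weight $e^{-\chi(\rho)}$. The $L^2$ theory for $q$-convex manifolds (Andreotti--Grauert, H\"ormander, \cite[Ch.~3]{MM07}) identifies, for every $j\geq q$, the finite-dimensional space $H^j(M,L^p\otimes E)$ with the space $\mathcal H^j$ of $L^2$-harmonic $(0,j)$-forms of the associated Kodaira Laplacian $\Box_p$. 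This reduces the theorem to the uniform bound $\dim\mathcal H^j\leq Cp^{n-j}$.

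Next I would split $M$ according to the two sources of positivity in the hypotheses. In the Bochner--Kodaira--Nakano identity for $\Box_p$ the curvature of $L^p$ appears as $p\,[ic_1(L,h^L),\Lambda]$, while the contributions of $h^E$, of $\omega$ and of $\chi(\rho)$ stay bounded in $p$. On $M\setminus U$ the $q$-convexity of $\rho$ provides at least $n-q+1$ positive eigenvalues of $i\partial\overline\partial\rho$, and choosing $\chi$ convex enough makes the curvature term strictly positive on $(0,j)$-forms for every $j\geq q$; the identity then forces the harmonic forms to vanish there. Hence $\mathcal H^j$ is concentrated on a relatively compact neighborhood of $K$ inside $U$, where by hypothesis $c_1(L,h^L)\geq0$; shrinking $U$ I may assume $\overline U$ compact.

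The heart of the matter, and the main obstacle, is to extract the sharp power $p^{n-j}$ on $U$ from mere semi-positivity. Writing $\dim\mathcal H^j=\int_M B_p^{(j)}\,dv_M$, where $B_p^{(j)}$ is the on-diagonal Bergman kernel of the orthogonal projection onto $\mathcal H^j$, and using the concentration from the previous step, it suffices to prove $\int_{\overline U}B_p^{(j)}\,dv_M\leq Cp^{n-j}$. Here I would run Berndtsson's rescaling: around a point $x_0\in U$ dilate the coordinates by $1/\sqrt{p}$, so that the rescaled $\Box_p$ converges to the model Laplacian on $\C^n$ with the semi-positive quadratic weight determined by the eigenvalues of $c_1(L,h^L)_{x_0}$. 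For $j\geq1$ and semi-positive curvature the model $(0,j)$-Laplacian has a spectral gap away from the null directions of the curvature, which suppresses harmonic $(0,j)$-forms in the positive directions and, after integrating the resulting local density of states, produces the factor $p^{n-j}$. Berndtsson realizes this through Siu's $\partial\overline\partial$-formula together with an a priori bound on $\int|u|^2$ in terms of the curvature \cite{Bern:02}.

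I expect the delicate point to be uniformity across the locus where the rank of $c_1(L,h^L)$ drops. There the spectral gap closes along the flat directions, so the Gaussian confinement degenerates, and the accounting of how many flat directions survive is exactly what distinguishes the order $p^{n-j}$ from Siu's $p^n$; carrying this out uniformly, so that $C$ does not depend on $p$, is the crux. Granting the estimate $\int_{\overline U}B_p^{(j)}\,dv_M\leq Cp^{n-j}$, the theorem follows: there are only finitely many degrees $q\leq j\leq n$, so $C$ may be chosen independent of $j$, the rescaling bound is uniform in $p$, and the finitely many small values of $p$ are absorbed into $C$, giving $\dim H^j(M,L^p\otimes E)\leq Cp^{n-j}$ for all $j\geq q$ and $p\geq1$. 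A last technical check is to reconcile the complete-metric $L^2$ set-up needed for the Hodge identification on the non-compact manifold $M$ with this uniform-in-$p$ constant.
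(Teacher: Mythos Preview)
The paper does not prove this theorem; it is quoted from \cite[Theorem~1.5]{Wh:21b} and used as a black box in the proof of Theorem~\ref{thm_qcvx_bb}. There is therefore no proof in the present paper to compare your proposal against.

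That said, your outline is broadly in the spirit of the argument in \cite{Wh:21b}: pass to $L^2$-harmonic $(0,j)$-forms, use the $q$-convex weight $\chi(\rho)$ in the Bochner--Kodaira--Nakano identity to concentrate the $L^2$ mass of harmonic forms on a compact set inside $U$, and then apply Berndtsson's local estimate on the semi-positive region. Two points deserve sharpening. First, the Bochner--Kodaira identity does not force harmonic forms to \emph{vanish} on $M\setminus U$; it yields a global inequality from which one extracts a concentration estimate of the form $\|u\|_{L^2(M)}\leq C\|u\|_{L^2(U')}$ for some relatively compact $U'\subset U$, with $C$ independent of $p$. Second, in \cite{Bern:02} and \cite{Wh:21b} the factor $p^{n-j}$ is not obtained via a spectral gap of a model Laplacian but through Siu's $\partial\overline\partial$-formula and an iterated integration-by-parts that trades each antiholomorphic direction for a factor $p^{-1}$; your rescaling heuristic points in the right direction but is not the actual mechanism, and the uniformity you flag as delicate is handled precisely by that algebraic identity rather than by spectral analysis.
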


\begin{proof}[Proof of Theorem \ref{thm_qcvx_bb}]
As in the proof of Theorem \ref{thm_hmi_q}, 
let $\pi:\til M\rightarrow M$ be the proper modification provided 
by Theorem \ref{thm_tech}. 
So $\pi:\til M\setminus\til D\to M\setminus S(h^L)$ 
is biholomorphic and $\til M$ is  $q$-convex with exceptional set 
$\til D\cup\pi^{-1}(K)\subset\til U$, where $\til U=\pi^{-1}(U)$. 
By \eqref{e:c1}, it follows that 
$c_1(\oh L,h^{\oh L})=mc_1(\til L,h^{\til L})=
m\pi^*c_1(L,h^L)\geq0$ 
on $\til U\setminus\til D$. Since $h^{\oh L}$ is smooth, 
this implies that $c_1(\oh L,h^{\oh L})\geq0$ on $\til U$. 
Using \eqref{e:decomp}, we obtain that \eqref{e:cohomology} 
holds for $0\leq j\leq n$, where $p=mp'+m'$ and $F_{m'}$, 
$0\leq m'<m$, are the line bundles defined in \eqref{e:remainder}. 
Applying Theorem \ref{thm_wk_est} to $\wi M$, 
$(\widehat L,h^{\widehat{L}})$ and $F_{m'}$, 
we obtain for $p\geq m$ and $j\geq q$,
\[ \dim H^j(M, L^p\otimes E\otimes\cI(h^{L^p}))=
\dim H^j(\wi{M},\widehat{L}^{p'}\otimes F_{m'})
\leq C(p')^{n-j}\leq Cp^{n-j}, \]
which is the desired estimate.
\end{proof}

\subsection{Pseudoconvex domains}    

We establish here the singular holomorphic Morse inequalities 
on smooth pseudoconvex domains and weakly $1$-complete manifolds. 
The corresponding results fot smooth Hermitian metrics
were obtained in \cite{Bou:89,MM07,Mar92b}.

Let $M$ be a relatively 
compact domain with smooth boundary $bM$ in a complex manifold $V$. 
Let $\rho\in \cC^\infty(V,\R)$ be a defining function of $M$, 
i.e.\ $M=\{ x\in V:\,\rho(x)<0 \}$ and $d\rho\neq 0$ 
on the boundary $bM=\{x\in V: \rho(x)=0\}$. 
Let $T^{(1,0)}_xbM:=\{ v\in T_x^{(1,0)}V: \dbar\rho(v)=0 \}$
be the holomorphic tangent space of $bM$ at $x\in bM$. 
The Levi form $\cL_\rho$ is the restriction of $\dbar\ddbar\rho$
to the holomorphic tangent bundle $T^{(1,0)}bM$. 
The domain $M$ is called pseudoconvex if the Levi form 
$\cL_\rho$ is positive semidefinite. 
We need the following holomorphic Morse inequality 
for pseudoconvex domains.
\begin{thm}[{\cite[(3.5.25)]{MM07}}]\label{T:pcvxs}
Let $M\Subset V$ be a smooth pseudoconvex domain 
in a complex manifold $V$ and let $L,E$ be holomorphic 
vector bundles on $V$ with $\rank (L)=1$. Let $h^L$ be a 
Hermitian metric on $L$ such that $c_1(L,h^L)>0$ on $bM$. 
Then, as $p\rightarrow \infty$, we have
\[\dim H^0(M,L^p\otimes E)\geq\rank(E)\,\frac{p^n}{n!}
\int_{M(\leq 1)} c_1(L,h^L)^n+o(p^n).\]
\end{thm}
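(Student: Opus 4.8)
The plan is to deduce the asserted bound on $H^0$ from the strong holomorphic Morse inequality at level $q=1$, which I would prove analytically through the $\ddbar$-Neumann problem. Fix a Hermitian metric on $V$ and write $\cH^j(p)=\Ker\Box^{(j)}_p\subset L^2_{(0,j)}(M,L^p\otimes E)$ for the space of square-integrable harmonic forms, where $\Box^{(j)}_p$ is the Gaffney extension of the Kodaira--Laplace operator $\ddbar\,\ddbar^{*}+\ddbar^{*}\ddbar$. Every element of $\cH^0(p)$ is a holomorphic section of $L^p\otimes E$, so $\dim H^0(M,L^p\otimes E)\geq\dim\cH^0(p)$. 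If I can show
\[
\dim\cH^1(p)-\dim\cH^0(p)\leq -\rank(E)\,\frac{p^n}{n!}\int_{M(\leq 1)}c_1(L,h^L)^n+o(p^n),
\]
then, since $\dim\cH^1(p)\geq 0$, rearranging and combining with the previous inequality yields the theorem. So the whole problem reduces to this strong Morse inequality for the $L^2$ harmonic spaces.

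The domain of $\Box^{(j)}_p$ encodes the $\ddbar$-Neumann (free) boundary condition, and $\Box^{(j)}_p$ is nonnegative self-adjoint. Since $M\Subset V$ is smoothly bounded and pseudoconvex, the Morrey-Kohn-H\"ormander identity applies to forms in $\Dom(\ddbar)\cap\Dom(\ddbar^{*})$, and its boundary integral is controlled by the Levi form $\cL_\rho\geq 0$; dropping this nonnegative boundary contribution reduces the lower bound for $\Box^{(j)}_p$ to the interior curvature operator $p\,[iR^{(L,h^L)},\Lambda]+O(1)$, exactly as on a compact manifold. This basic estimate is what makes the spectrum of $\Box^{(j)}_p$ discrete near $0$ with finite multiplicities, so the spectral spaces used below are finite dimensional and $\cH^j(p)$ represents the $L^2$ cohomology.

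The core is then the spectral-space argument. For a cutoff $\lambda_p$ to be chosen (e.g.\ a slowly growing sequence, or a small multiple of $p$), let $\cE^j_{\lambda_p}$ be the range of the spectral projection $\mathbf{1}_{[0,\lambda_p]}(\Box^{(j)}_p)$. As $\ddbar$ commutes with $\Box_p$, the spaces $(\cE^\bullet_{\lambda_p},\ddbar)$ form a finite-dimensional subcomplex whose cohomology is the harmonic space $\cH^\bullet(p)$, and the elementary linear algebra of finite complexes gives the algebraic strong Morse inequalities
\[
\sum_{j=0}^{q}(-1)^{q-j}\dim\cH^j(p)\leq\sum_{j=0}^{q}(-1)^{q-j}\dim\cE^j_{\lambda_p}.
\]
It then remains to prove $\sum_{j=0}^{q}(-1)^{q-j}\dim\cE^j_{\lambda_p}\leq\rank(E)\,\frac{p^n}{n!}\int_{M(\leq q)}(-1)^q c_1(L,h^L)^n+o(p^n)$, which I would obtain from the local heat-kernel asymptotics of $\Box^{(j)}_p$: after parabolic rescaling by $p^{-1/2}$ at an interior point $x$, the operator converges to a model harmonic oscillator (Bargmann-Fock Laplacian) whose low-lying spectrum is nonzero exactly when $x\in M(j)$ and whose on-diagonal trace contributes, after integration, the term $\rank(E)\,\frac{p^n}{n!}\int_{M(j)}(-1)^j c_1(L,h^L)^n$. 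Specializing to $q=1$ gives the inequality needed in the first paragraph.

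The main obstacle is entirely the boundary. Two things must be controlled uniformly in $p$: first, the $\ddbar$-Neumann Hodge theory and the basic estimate must be set up on the pseudoconvex domain so that the spaces $\cE^j_{\lambda_p}$ are finite dimensional; second, and crucially, I must show that no low-energy $(0,j)$-form with $j\geq 1$ concentrates near $bM$. This is where the hypothesis $c_1(L,h^L)>0$ on $bM$ enters: at boundary points all curvature eigenvalues are positive, so such a point lies in $M(0)$ and the model $(0,j)$-Laplacian for $j\geq 1$ has a spectral gap of order $p$. Hence $\Box^{(j)}_p\geq c\,p$ near $bM$ for $j\geq1$, which forces the elements of $\cE^j_{\lambda_p}$ to localize in a fixed compact subset of the interior, where the boundary condition is invisible and the compact-manifold asymptotics of the previous paragraph apply verbatim. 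Reconciling this boundary spectral gap with the interior off-diagonal heat-kernel estimates, uniformly in $p$, is the technical heart of the argument; on a compact manifold there is no boundary and the statement is Demailly's theorem.
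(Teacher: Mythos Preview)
The paper does not prove this theorem; it is quoted as a known result from \cite[(3.5.25)]{MM07} and used as a black box in the proof of Theorem~\ref{thm_pcvx}. Your sketch is essentially the argument behind that reference (going back to \cite{Bou:89}): reduce to the strong Morse inequality at level $q=1$ for the $L^2$ spectral spaces, invoke the algebraic Morse inequalities for the finite complex $(\cE^\bullet_{\lambda_p},\ddbar)$, compute the interior asymptotics by rescaling to the model Laplacian, and use the positivity of $c_1(L,h^L)$ on $bM$ to exclude boundary concentration of low-energy $(0,j)$-forms for $j\geq1$.

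One point to sharpen: on a merely pseudoconvex (not strongly pseudoconvex) domain, the Levi-form boundary term being $\geq0$ in the Bochner--Kodaira--Nakano--Morrey identity does not by itself force discreteness of the spectrum of $\Box^{(j)}_p$ near $0$; it is the strict positivity of the curvature operator $p\,[iR^{(L,h^L)},\Lambda]$ near $bM$ that produces the spectral gap of order $p$ and hence $\dim\cE^j_{\lambda_p}<\infty$ for $j\geq1$ and $p$ large. You identify this correctly in the last paragraph, but the middle paragraph reads as if the basic estimate alone suffices. Note also that the standard treatments (\cite{Bou:89}, \cite[\S3.5]{MM07}) typically bypass the $\ddbar$-Neumann boundary condition altogether by endowing the open manifold $M$ with a complete Hermitian metric constructed from the defining function, so that the Gaffney extension is self-adjoint without boundary terms and the abstract Morse inequalities on complete manifolds apply directly; your $\ddbar$-Neumann route is viable, but the complete-metric reformulation is the cleaner way to make the boundary localization rigorous.
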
	 

\begin{proof}[Proof of Theorem \ref{thm_pcvx}]
By shrinking $V$, we assume that $c_1(L, h^L)>0$ on $V\setminus M$.
Let $\rho$ be a defining function for $M$ and $\pi:\til V\rightarrow V$
be the proper modification from Theorem \ref{thm_tech}. 
Then  $\til D=\pi^{-1}(S(h^L))\subset\pi^{-1}(M)$ and 
$\pi:\til V\setminus\til D\to V\setminus S(h^L)$ is biholomorphic. 
So $\til M:=\pi^{-1}(M)$ is pseudoconvex with defining function 
$\rho\circ\pi$. Moreover, if $(\til L,h^{\til L})=(\pi^*L,\pi^*h^L)$ 
then $h^{\til L}$ is smooth and $c_1(\til L,h^{\til L})>0$ 
on a neighborhood of $b\til M$.
	
We write $p=mp'+m'$, $0\leq m'<m$. Let $\til E=\pi^*E$, 
and $\oh L,F_{m'}$ be the bundles defined in \eqref{e:dvd} 
and \eqref{e:remainder}, respectively. By \eqref{e:decomp} 
and \eqref{e:isoms} we have, for all $p$ sufficiently large, that
\[H^0(M,  L^p\otimes E\otimes \cI(h^{L^p}))\cong 
H^0(\til M,\oh L^{p'}\otimes F_{m'}).\]
Let $h^{\oh L}$ be the Hermitian metric on $\oh L$ provided 
by Theorem \ref{thm_tech} (B) and $\til M(\leq1)$ be the subset 
of $\til M$ where $R^{(\oh L,h^{\oh L})}$ is non-degenerate 
and has at most one negative eigenvalue. By \eqref{e:c1}, 
$c_1(\oh L,h^{\oh L})>0$ on $b\til M$, 
$\til M(\leq1)\setminus\til D=\pi^{-1}(M(\leq1))$ and 
\[\int_{\til M(\leq1)} c_1(\oh L,h^{\oh L})^n=
m^n\int_{\til M(\leq1)\setminus\til D} c_1(\til L,h^{\til L})^n=
m^n\int_{M(\leq1)} c_1(L,h^L)^n.\]	
In particular, the last integral exists.
Theorem \ref{thm_pcvx} now follows from 
Theorem \ref{T:pcvxs} applied to $(\oh L,h^{\oh L})$ and $F_{m'}$.
\end{proof}
Following Nakano \cite{Nak:70}, we call a manifold
\emph{weakly $1$-complete} if it admits a smooth plurisubharmonic 
exhaustion function $\varphi:M\to\R$. The holomorphic Morse inequalities
for weakly $1$-complete manifolds and line bundles with smooth
metrics appeared in \cite{Bou:89,MM07,Mar92b}. The general version
of \cite{Mar92b} for $q$-positive line bundles outside a compact set
answered a question of Ohsawa \cite[p.\ 218]{Oh:82}.
We state here a version of the holomorphic Morse inequalities
for positive line bundles outside a compact set.
Note that by the finiteness theorem due to Ohsawa
\cite[Ch.\ 3, Theorem 1.3]{Oh:82} if $L$, $E$
are bundles on a weakly $1$-complete manifold $M$ 
such that $\rank(L)=1$ and $L$ is positive 
outside a compact set there exists $p_0\in\N$
such that for every $p\geq p_0$ and for $j\geq1$ the spaces
$H^j(X,L^p\otimes E)$ are finite dimensional.
\begin{thm}\label{thm_hmi_q1}
Let $M$ be a weakly $1$-complete manifold
dimension $n$, and $L,E$ be holomorphic 
vector bundles on $M$ with $\rank(L)=1$. Let $h^L$ 
be a Hermitian metric on $L$ with algebraic singularities 
such that $S(h^L)$ is compact and $c_1(L,h^L)$ is positive
outside a compact set.
Then, for any $\ell\geq1$, 
the following strong and weak Morse inequalities hold as $p\to\infty$:
\begin{equation}
\sum_{j=\ell}^n(-1)^{j-\ell}
\dim H^j(M, L^p\otimes  E\otimes\cI(h^{ L^p}))
\leq \rank(E)\,\frac{p^n}{n!}
\int_{M(\geq\ell)}(-1)^\ell c_1(L,h^L)^n+o(p^n), \label{eq_hmi_q10}
\end{equation}
\begin{equation} 
\dim H^\ell(M, L^p\otimes  E\otimes \cI(h^{ L^p}))\leq 
\rank(E)\,\frac{p^n}{n!}\int_{M(\ell)}(-1)^\ell c_1(L,h^L)^n+o(p^n). 
\label{eq_hmi_q11}
\end{equation}
\begin{equation} 
\dim H^0(M, L^p\otimes  E\otimes \cI(h^{ L^p}))\geq 
\rank(E)\,\frac{p^n}{n!}\int_{M(\leq1)} c_1(L,h^L)^n+o(p^n). 
\label{eq_hmi_q12}
\end{equation}
\end{thm}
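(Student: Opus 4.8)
The plan is to follow verbatim the reduction scheme already used for Theorems \ref{thm_hmi_q} and \ref{thm_qcvx_bb}, replacing the $q$-convex input theorem by its weakly $1$-complete analogue. First I would apply Theorem \ref{thm_tech} to $(L,h^L)$ to obtain the proper modification $\pi:\til M\to M$, which is a finite composition of blow-ups with smooth centers, biholomorphic off $\til D=\pi^{-1}(S(h^L))$. Since a weakly $1$-complete manifold admits a smooth psh exhaustion $\varphi$, the pull-back $\varphi\circ\pi$ is still a smooth psh exhaustion of $\til M$ (properness is preserved because $\pi$ is proper, plurisubharmonicity because $\pi$ is holomorphic, and the only place where $\varphi\circ\pi$ could fail smoothness is avoided since $\varphi$ is globally smooth on $M$); hence $\til M$ is again weakly $1$-complete. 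By \eqref{e:c1} and the fact that $c_1(L,h^L)$ is positive outside a compact $Z\subset M$, the smooth form $c_1(\oh L,h^{\oh L})=m\,c_1(\til L,h^{\til L})$ is positive on $\til M\setminus\pi^{-1}(Z)$, i.e.\ $\oh L$ is positive outside the compact set $\pi^{-1}(Z)$.

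Next I would record the decompositions. Writing $p=mp'+m'$ with $0\le m'<m$, equations \eqref{e:decomp} and \eqref{e:remainder} give $\til L^p\otimes\cI(h^{\til L^p})=\oh L^{p'}\otimes\til L^{m'}\otimes\cO_{\til M}(-\sum_j\lfloor m'cc_j\rfloor\til D_j)$ and a finite list of auxiliary bundles $F_{m'}$ with $\rank(F_{m'})=\rank(E)$, so that \eqref{e:isoms} yields, for $p$ large,
\begin{equation*}
H^j(M,L^p\otimes E\otimes\cI(h^{L^p}))\cong H^j(\til M,\oh L^{p'}\otimes F_{m'}),\quad 0\le j\le n.
\end{equation*}
In particular, by Ohsawa's finiteness theorem applied on $\til M$ to the positive-outside-a-compact bundle $\oh L$ and each $F_{m'}$, the right-hand sides are finite dimensional for $p\ge p_0$ and $j\ge1$, which is what makes the Morse inequalities meaningful; this also transports back to finiteness of $H^j(M,L^p\otimes E\otimes\cI(h^{L^p}))$.

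The heart of the argument is to invoke, on $\til M$, the smooth holomorphic Morse inequalities for weakly $1$-complete manifolds with line bundles positive outside a compact set — the version of \cite{Mar92b} (see also \cite{Bou:89,MM07}). Applied to $(\oh L,h^{\oh L})$ and to each $F_{m'}$, $0\le m'<m$, it gives, for $\ell\ge1$ and $p=mp'+m'$ large,
\begin{equation*}
\sum_{j=\ell}^n(-1)^{j-\ell}\dim H^j(\til M,\oh L^{p'}\otimes F_{m'})\le\rank(E)\,\frac{(p')^n}{n!}\int_{\til M(\ge\ell)}(-1)^\ell c_1(\oh L,h^{\oh L})^n+o(p^n),
\end{equation*}
together with the corresponding lower bound in degree $0$,
\begin{equation*}
\dim H^0(\til M,\oh L^{p'}\otimes F_{m'})\ge\rank(E)\,\frac{(p')^n}{n!}\int_{\til M(\le1)}c_1(\oh L,h^{\oh L})^n+o(p^n).
\end{equation*}
Since $\oh L$ is positive outside $\pi^{-1}(Z)$, the index sets $\til M(\ge\ell)$ for $\ell\ge1$ and $\til M(\le1)$ are contained in $\pi^{-1}(Z)$, hence have compact closure, so the integrals are finite; and by \eqref{e:c1}, $\til M(\ell)\setminus\til D=\pi^{-1}(M(\ell))$, so that $\int_{\til M(\ge\ell)}c_1(\oh L,h^{\oh L})^n=m^n\int_{M(\ge\ell)}c_1(L,h^L)^n$ and likewise for $M(\le1)$ and $M(\ell)$. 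Substituting $(p')^n m^n=(mp')^n=(p-m')^n=p^n+o(p^n)$, and combining with the cohomology isomorphisms above, yields \eqref{eq_hmi_q10} and \eqref{eq_hmi_q12}; the weak inequality \eqref{eq_hmi_q11} follows by adding the strong inequality \eqref{eq_hmi_q10} for $\ell$ and for $\ell+1$, exactly as at the end of the proof of Theorem \ref{thm_hmi_q}.

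I expect the only genuine subtlety to be verifying that weak $1$-completeness is preserved under the blow-ups — specifically that $\varphi\circ\pi$ remains a \emph{smooth} exhaustion — and, relatedly, citing the precisely correct form of the smooth Morse inequalities for weakly $1$-complete manifolds with positivity only outside a compact set; once those are in hand the rest is the same bookkeeping with $p=mp'+m'$ already carried out for the $q$-convex case. One should also make sure Ohsawa's finiteness theorem is quoted on $\til M$ for the twisted bundles $\oh L^{p'}\otimes F_{m'}$ so that the left-hand sides of the inequalities on $M$ are finite for all large $p$.
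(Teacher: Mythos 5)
Your proposal is correct and follows essentially the same route as the paper, whose proof of Theorem \ref{thm_hmi_q1} is just the one-line reduction ``if $h^L$ is smooth the statement is \cite[Theorem 3.5.12]{MM07}, so proceed as above by using that result on the blow-up'' --- i.e.\ exactly the Theorem \ref{thm_tech} reduction with the $p=mp'+m'$ bookkeeping that you spell out. Your extra checks (that $\varphi\circ\pi$ is a smooth psh exhaustion of $\til M$, that $\oh L$ is positive outside $\pi^{-1}(Z)$, and Ohsawa's finiteness theorem on $\til M$) are precisely the points the paper leaves implicit, so nothing is missing.
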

\begin{proof} 
If the metric $h^L$ is smooth the statement reduces to
\cite[Theorem 3.5.12]{MM07}, so the proof proceeds as above by
using \cite[Theorem 3.5.12]{MM07} on the blow-up.
\end{proof} 
\begin{rem}
In the same vein we can generalize \cite[Theorem, p.\ 897]{Mar92b}
for the case of a line bundle $L$ which is $q$-positive (that is, whose curvature
has $n-q+1$ positive eigenvalues) outside a compact set $K$.
In this case \eqref{eq_hmi_q10}, \eqref{eq_hmi_q11} hold 
for the cohomology groups 
$H^\ell(M_c, L^p\otimes  E\otimes \cI(h^{ L^p}))$,
$\ell\geq q$, on any sublevel set 
$M_c=\{\varphi<c\}$ containing $K$ and $S(h^L)$.
If we assume moreover that $X$ is endowed with a Hermitian
metric which is K\"ahler outside $K$ and $L$ is semi-positive outside $K$,
then the restriction morphism 
$H^\ell(M, L^p\otimes  E\otimes \cI(h^{ L^p}))\to
H^\ell(M_c, L^p\otimes  E\otimes \cI(h^{ L^p}))$ is an isomorphism
cf. \cite[Theorem 2.5, p.\ 221]{Oh:82}.
We deduce that the Morse inequalities \eqref{eq_hmi_q10}, \eqref{eq_hmi_q11}
hold in this case for 
$H^\ell(M, L^p\otimes  E\otimes \cI(h^{ L^p}))$, $\ell\geq q$.
\end{rem}

\subsection{Hyperconcave manifolds}
 A complex manifold $M$ is called \emph{hyperconcave} if 
 there exists a smooth function $\varphi:M\rightarrow (-\infty,u]$, 
 where $u\in\R$, such that $M_c:=\{\varphi>c\}\Subset M$
 for all $c\in (-\infty,u]$ and $\varphi$ is strictly plurisubharmonic 
 outside a compact subset (cf.\ \cite{Colt:90,Colt:91,MD06}). 
 A hyperconcave manifold is $1$-concave in the sense of 
 Andreotti-Grauert \cite{AG:62}, see Section \ref{S:qconc}.
 
If $X$ is a compact complex space with isolated singularities the 
regular locus $X_{\reg}$ is hyperconcave (see \cite[Example 3.4.2]{MM07}. 
A complete K\"{a}hler manifold of finite volume and bounded negative 
sectional curvature is hyperconcave 
(see e.g.\ \cite[Theorem 6.3.8]{MM07}). 
As in the compact case, Siegel's lemma holds for hyperconcave manifolds: 
\begin{equation}\label{e:Siegel}
\dim H^0(M, L^p\otimes K_M)\leq Cp^{\varrho_p},
\end{equation}
where $\varrho_p\leq \dim M$ is the maximal rank of 
the Kodaira map associated to $H^0(M, L^p\otimes K_M)$ 
(see \cite[Theorem 3.4.5, Remark 3.4.6]{MM07}).

We will need the following holomorphic Morse inequality for hyperconcave 
manifolds.
\begin{thm}\label{T:hcms}
Let $M$ be a hyperconcave manifold of dimension $n$ 
and $(L,h^L),(E,h^E)$ be Hermitian holomorphic line bundles 
on $M$ that are semi-positive outside a compact set. Then
\begin{equation}\label{e:hcms}
 \dim H^0_{(2)}(M,L^p\otimes E\otimes K_M)\geq 
\frac{p^n}{n!}\int_{M(\leq 1)}c_1(L,h^L)^n+o(p^n)\,,\,
\text{ as } p\rightarrow \infty,
\end{equation} 
where the set $M(\leq1)$ corresponds to the metric $h^L$ and 
$H^0_{(2)}(M,L^p\otimes E\otimes K_M)$ is the space of 
$L^2$-holomorphic sections of $L^p\otimes E\otimes K_M$ 
with respect to $h^L, h^E$ and any metric on $M$.
\end{thm}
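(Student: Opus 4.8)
The plan is to derive this from the $L^2$ holomorphic Morse inequalities for the Kodaira Laplacian on $M$, the essential case where $E$ is trivial being \cite[Theorem 3.4.9]{MM07}; the extra line bundle $E$ will enter only as a lower-order twist. The first observation is that the $L^2$-inner product on sections of $L^p\otimes E\otimes K_M$ is intrinsic, because the $K_M$-factor pairs with the volume form of any Hermitian metric on $M$; hence $H^0_{(2)}(M,L^p\otimes E\otimes K_M)$ does not depend on the metric chosen on $M$, and I would fix one that is complete near the concave end. Using the hyperconcave exhaustion $\varphi\colon M\to(-\infty,u]$, whose superlevel sets $M_c=\{\varphi>c\}$ are relatively compact and on which $\varphi$ is strictly plurisubharmonic outside a compact set $K$ (enlarged so as to contain also the locus where $c_1(L,h^L)$ or $c_1(E,h^E)$ fails to be semi-positive), I would build such a metric by adding $i\dbar\ddbar\chi(-\varphi)$ for a suitable convex increasing $\chi$. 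The desired bound will then follow from the holomorphic Morse inequalities for the $L^2$-cohomology at level $q=1$.

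The analytic core is a fundamental estimate in positive degrees. Applying the Bochner--Kodaira--Nakano formula to $(0,q)$-forms with values in $L^p\otimes E$ for $q\ge1$, the curvature acts by an endomorphism bounded below by the sum of the $q$ smallest eigenvalues of the curvature of $L^p\otimes E$; outside $K$ this is $\ge0$ by semi-positivity, and the strict plurisubharmonicity of $\varphi$, exploited through the completeness modification, pushes it above a fixed positive constant on forms supported in $M\setminus K$. This yields, for each $q\ge1$, a spectral gap for $\Box_p^{(q)}$ away from $K$, the finite-dimensionality of $H^q_{(2)}$, and, by the standard localization via finite propagation speed as in \cite[Ch. 3]{MM07}, the confinement of the eigenforms of $\Box_p^{(q)}$ with eigenvalue below a threshold $\lambda_p$ to a fixed neighborhood of $K$; note that $M(q)\subset K$ for all $q\ge1$.

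I would then pass to the finite low-energy subcomplexes of the $\ddbar$-complex. Writing $N_p^q(\lambda_p)$ for the dimension of the span $V_p^q$ of eigenforms of $\Box_p^{(q)}$ with eigenvalue $\le\lambda_p$, the degree-$1$ gap makes $N_p^1(\lambda_p)$ finite; since $\ddbar\colon V_p^0\to V_p^1$ has kernel the $L^2$-holomorphic sections, rank-nullity forces $N_p^0(\lambda_p)\le\dim H^0_{(2)}+N_p^1(\lambda_p)<\infty$ (using \eqref{e:Siegel} for the finiteness of $H^0_{(2)}$) and at the same time gives the Morse inequality $\dim H^0_{(2)}\ge N_p^0(\lambda_p)-N_p^1(\lambda_p)$. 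Invoking Demailly's local holomorphic Morse inequalities (heat-kernel asymptotics, \cite[Ch. 1, 3]{MM07}), in which $E$ contributes only through its rank-one leading coefficient, the counting difference satisfies $N_p^0(\lambda_p)-N_p^1(\lambda_p)\ge\frac{p^n}{n!}\int_{M(\le1)}c_1(L,h^L)^n+o(p^n)$, which is the assertion.

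The main obstacle is the concave end in degree $0$. Unlike in the degrees $q\ge1$, there is no curvature gap in degree $0$, so the relevant degree-$0$ low-energy states are not localized near $K$: they spread over all of $M(0)$, the non-compact set where $c_1(L,h^L)$ is positive definite. Consequently the local Morse inequalities must be applied along an exhaustion of $M$ by relatively compact pieces, and one must control the joint behaviour of the two limits $p\to\infty$ and $M_c\uparrow M$. It is here that hyperconcavity is essential: it supplies the gap and localization in positive degrees, and the resulting bound $N_p^0(\lambda_p)\le\dim H^0_{(2)}+N_p^1(\lambda_p)$ shows a posteriori that the partial integrals $\int_{M_c\cap M(0)}c_1(L,h^L)^n$ are uniformly bounded, so that $\int_{M(\le1)}c_1(L,h^L)^n$ converges and the lower bound is meaningful.
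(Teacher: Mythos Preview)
Your proposal is correct and follows essentially the same approach as the paper: both reduce to \cite[Theorem 3.4.9]{MM07} for trivial $E$ and observe that twisting by a line bundle $E$ semi-positive outside a compact set preserves the crucial Bochner--Kodaira--Nakano estimates underlying that result (the paper points explicitly to \cite[Theorem 3.3.5\,(i), (3.3.10--11)]{MM07}). The paper's proof is a three-line reference to \cite{MM07}, whereas you have unpacked the mechanism (complete metric from the exhaustion, spectral gap in degree $\geq1$, low-energy subcomplex, local Morse asymptotics), but the content is the same.
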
	
\begin{proof}
The case when $(E,h^E)$ is trivial was treated in 
\cite[Theorem 3.4.9]{MM07}.
In the general case,
we observe that \cite[Theorem 3.3.5 (i)]{MM07},
on which \cite[Theorem 3.4.9]{MM07} is based,
holds if we twist $L^p$ with a line bundle $E$
which is semi-positive outside a compact set, since the
crucial estimates \cite[(3.3.10-11)]{MM07} still hold
in this case. Thus, the proof of \cite[Theorem 3.4.9]{MM07}
goes through with only minor modifications.
 \end{proof}
 \begin{proof}[Proof of Theorem \ref{thm_hyper}]
 Let $\pi:\til M\rightarrow M$ be the proper modification provided 
 by Theorem \ref{thm_tech}, so 
 $\til D=\pi^{-1}(S(h^L))\subset\pi^{-1}(Z)$ and 
 $\pi:\til M\setminus\til D\to M\setminus S(h^L)$ 
 is biholomorphic. It is clear by the definition that $\til M$ 
 is hyperconcave. We write $p=mp'+m'$, $0\leq m'<m$, 
 and set
\[D_{m'}:=\sum_{j=1}^k \lfloor\, m'cc_j \rfloor\,\til D_j\,,\,\;E_{m'}
:=\til L^{m'}\otimes\cO_{\til M}(-D_{m'}).\]
We have by \eqref{e:decomp} that 
$\til L^p\otimes\cI(h^{\til L^p})=\oh L^{p'}\otimes E_{m'}$, 
where $\oh L$ is defined in \eqref{e:dvd}. 
Therefore, using \eqref{e:isom}, we obtain for $p$ sufficiently large that
\begin{equation}\label{e:cohom}
H^0(M,  L^p\otimes K_M\otimes \cI(h^{L^p}))\cong 
H^0(\til M,\til L^p\otimes K_{\til M}\otimes \cI(h^{\til L^p}))
=H^0(\til M,\oh L^{p'}\otimes E_{m'}\otimes K_{\til M}).
\end{equation}
Let $h^{\oh L}$ be the Hermitian metric of $\oh L$ from
Theorem \ref{thm_tech} (B) and $\til M(k)$ be the subset of 
$\til M$ where $R^{(\oh L,h^{\oh L})}$ is non-degenerate 
and has exactly $k$ negative eigenvalues. By \eqref{e:c1},
\[c_1(\oh L,h^{\oh L})=mc_1(\til L,h^{\til L})\geq0 \text{ on }
\til M\setminus\pi^{-1}(Z).\]
Applying \eqref{e:hcms} to $(\oh L,h^{\oh L})$ and using 
\eqref{e:Siegel}, we deduce that 
$0\leq\int_{\til M(0)} c_1(\oh L,h^{\oh L})^n<+\infty$.

Note that $E_{m'}$ carries a Hermitian metric $h_{m'}$ 
which is semi-positive outside a compact set. Indeed, let $s_{m'}$ 
be the canonical section of $\cO_{\til M}(-D_{m'})$ and $\eta_{m'}$ 
be the singular Hermitian metric on $\cO_{\til M}(-D_{m'})$ such 
that $|s_{m'}|_{\eta_{m'}}=1$ on $\til M\setminus D_{m'}$. 
On $\til M\setminus\til D$, the metric $h^{\til L^{m'}}\otimes\eta_{m'}$
is smooth and $c_1(\cO_{\til M}(-D_{m'}),\eta_{m'})=0$. 
So we can find a smooth metric $h_{m'}$ of $E_{m'}$ 
such that $h_{m'}=h^{\til L^{m'}}\otimes\eta_{m'}$ on 
$\til M\setminus K$ for some compact $K\supset\pi^{-1}(Z)$. 
Hence
\[c_1(E_{m'},h_{m'})=m'c_1(\til L,h^{\til L})\geq0 \text{ on }
\til M\setminus K.\]
Therefore we can apply Theorem \ref{T:hcms} to 
$(\oh L,h^{\oh L})$ and $(E_m',h_m')$. 
Using \eqref{e:c1} we infer that 
$\til M(\leq1)\setminus\til D=\pi^{-1}(M(\leq1))$ and 
\[\int_{\til M(\leq1)} c_1(\oh L,h^{\oh L})^n=
m^n\int_{\til M(\leq1)\setminus\til D} c_1(\til L,h^{\til L})^n=
m^n\int_{M(\leq1)} c_1(L,h^L)^n.\]	
In particular, $\int_{M(\leq 1)} c_1(L,h^L)^n\in\R$ exists. 
By \eqref{e:cohom} and \eqref{e:hcms} we obtain, as $p\to\infty$,
\begin{align*}
\dim H^0(M,  L^p\otimes K_M\otimes \cI(h^{L^p}))&\geq
\dim H^0_{(2)}(\til M,\oh L^{p'}\otimes E_{m'}\otimes K_{\til M}) \\
&\geq \frac{(p')^n}{n!}\int_{\til M(\leq 1)}c_1(\oh L,h^{\oh L})^n+o(p^n) \\
&= \frac{p^n}{n!}\int_{M(\leq 1)}c_1(L,h^L)^n+o(p^n).
\end{align*}
This is the desired estimate.
\end{proof}
Theorem \ref{thm_hyper} has the following immediate corollary. 
Recall that in analogy to the volume of a line bundle
\cite{Bouck:02}, the adjoint volume of a line bundle $L$ on 
a complex manifold $M$ of dimension $n$ is defined by
\[\vol^*(L):=\limsup_{p\rightarrow\infty}
\frac{n!}{p^n}\dim H^0(M,L^p\otimes K_M).\]
The volume of any line bundle on a hyperconcave 
manifolds is finite by \eqref{e:Siegel}.
 
\begin{cor}\label{cor_ajvol} 
Let $M$ be a hyperconcave manifold of dimension $n$ and $L$ be a holomorphic line bundle on $M$. 

\smallskip

(i) If $h^L$ is a Hermitian metric on $L$ with algebraic 
singularities such that $S(h^L)\subset Z$ and $c_1(L,h^L)\geq 0$ 
on $M\setminus Z$ for some compact $Z\subset M$, then
\[0\leq \int_{M(0)}c_1(L,h^L)^n\leq 
\vol^*(L)-\int_{M(1)}c_1(L,h^L)^n< \infty.\]
In particular, if $c_1(L,h^L)\geq 0$ on $M$, we have
$\int_{M}c_1(L,h^L)^n\leq\vol^*(L)<\infty$.
\smallskip

(ii) If $\varphi$ is a function on $M$ with algebraic 
singularities as in \eqref{e:algsing} such that $\varphi$ is smooth and plurisubharmonic 
on $M\setminus K$ for some compact $K\subset M$, then 
 \[0\leq\int_{M(0)}(i\dbar\ddbar\varphi)^n\leq 
 -\int_{M(1)}(i\dbar\ddbar\varphi)^n< +\infty.\]
\end{cor}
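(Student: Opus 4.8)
The plan is to derive Corollary \ref{cor_ajvol} from Theorem \ref{thm_hyper} together with Siegel's lemma \eqref{e:Siegel} and the asymptotic Riemann--Roch inequality encoded in Theorem \ref{thm_hyper} applied to a shifted bundle. For part (i), first observe that since $c_1(L,h^L)\geq0$ on $M\setminus Z$, the curvature has no negative eigenvalues there, so $M(0)$ and $M(1)$ are relatively compact, being contained in $Z$; hence $M(\leq1)=M(0)\cup M(1)=Z(0)\cup Z(1)$ and all the integrals $\int_{M(0)}c_1(L,h^L)^n$, $\int_{M(1)}c_1(L,h^L)^n$ are finite and non-negative (note $c_1(L,h^L)^n$ is a positive measure on $M(0)$ and a negative measure on $M(1)$ — wait, on $M(1)$ there is one negative and $n-1$ positive eigenvalues, so $(-1)^1c_1^n\geq0$, i.e. $-\int_{M(1)}c_1^n\geq0$; I will state this carefully). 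Then Theorem \ref{thm_hyper} gives
\[
\dim H^0(M,L^p\otimes K_M\otimes\cI(h^{L^p}))\geq\frac{p^n}{n!}\int_{M(\leq1)}c_1(L,h^L)^n+o(p^n)
=\frac{p^n}{n!}\Big(\int_{M(0)}c_1(L,h^L)^n+\int_{M(1)}c_1(L,h^L)^n\Big)+o(p^n),
\]
and since $\cI(h^{L^p})\subset\cO_M$ we have $H^0(M,L^p\otimes K_M\otimes\cI(h^{L^p}))\hookrightarrow H^0(M,L^p\otimes K_M)$, so dividing by $p^n/n!$ and taking $\limsup$ yields
\[
\vol^*(L)\geq\int_{M(0)}c_1(L,h^L)^n+\int_{M(1)}c_1(L,h^L)^n,
\]
which rearranges to the claimed chain of inequalities once we record $\int_{M(1)}c_1(L,h^L)^n\leq0$ (so $-\int_{M(1)}c_1^n\geq0$) and $\int_{M(0)}c_1^n\geq0$. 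The finiteness of $\vol^*(L)$ is exactly \eqref{e:Siegel}. For the special case $c_1(L,h^L)\geq0$ on all of $M$, we have $M(1)=\emptyset$ and $M(0)=M\setminus S(h^L)$, and the integral of $c_1(L,h^L)^n$ over $M$ means over $M\setminus S(h^L)$ where the form is smooth, giving $\int_M c_1(L,h^L)^n\leq\vol^*(L)<\infty$.

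For part (ii), I would reduce to part (i) by choosing a holomorphic line bundle to absorb $\varphi$. Since the statement is about $i\dbar\ddbar\varphi$ alone and not about an ambient line bundle, the natural move is to take $L=\cO_M$ the trivial bundle with the smooth metric $h^L_0=1$ and put on it the singular metric $h^L=e^{-2\varphi}$; then $c_1(L,h^L)=\ddc\varphi=\frac{i}{\pi}\dbar\ddbar\varphi$, which differs from $i\dbar\ddbar\varphi$ only by the positive constant $1/\pi$ — so I will either carry that constant through or normalize it away, and the sets $M(0)$, $M(1)$ for this metric are precisely $\{i\dbar\ddbar\varphi>0\}$ and $\{i\dbar\ddbar\varphi\text{ has signature }(1,n-1)\}$. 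The hypothesis that $\varphi$ is smooth and plurisubharmonic on $M\setminus K$ gives $c_1(\cO_M,h^L)\geq0$ on $M\setminus K$, so part (i) applies with $Z=K$: it yields $0\leq\int_{M(0)}c_1(L,h^L)^n\leq\vol^*(\cO_M)-\int_{M(1)}c_1(L,h^L)^n<\infty$. The final point is that $\vol^*(\cO_M)=0$: indeed $H^0(M,\cO_M^p\otimes K_M)=H^0(M,K_M)$ is independent of $p$, hence of dimension $O(1)=o(p^n)$ (in fact it may be infinite-dimensional if $M$ is concave — so I must be slightly careful).

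Here lies the main obstacle, and it is a real one: on a hyperconcave manifold $H^0(M,K_M)$ need not be finite-dimensional, so $\vol^*(\cO_M)$ as literally defined via $\limsup$ could be $+\infty$ or undefined. To handle this I would instead apply \eqref{e:Siegel} directly: Siegel's lemma states $\dim H^0(M,L^p\otimes K_M)\leq Cp^{\varrho_p}$ with $\varrho_p\leq n$ the maximal rank of the Kodaira map, and for $L=\cO_M$ the Kodaira map of $H^0(M,\cO_M^p\otimes K_M)=H^0(M,K_M)$ has rank $\varrho_p=\varrho$ independent of $p$; moreover $H^0(M,\cO_M\otimes K_M\otimes\cI(e^{-2p\varphi}))$ — the space actually appearing on the left of the inequality from Theorem \ref{thm_hyper} — is contained in $H^0(M\setminus K, K_M)\cap\{$sections $L^2$ against $e^{-2p\varphi}$ near $K\}$, and as $p\to\infty$ the multiplier ideal $\cI(p\varphi)$ shrinks, forcing the Kodaira rank $\varrho_p$ to be bounded. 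The clean way is: the left-hand side of \eqref{e:hcms} used in the proof of Theorem \ref{thm_hyper} is $\dim H^0_{(2)}(\widetilde M,\widehat L^{p'}\otimes E_{m'}\otimes K_{\widetilde M})$ with $\widehat L=\cO_{\widetilde M}(-\widetilde D)$ having a metric with $c_1(\widehat L,h^{\widehat L})\geq0$ off a compact set and total volume $\int_{\widetilde M(0)}c_1(\widehat L,h^{\widehat L})^n$ finite by Siegel; hence its dimension is $O(p^n)$ with leading coefficient at most that volume, which is what bounds $\int_{M(0)}(i\dbar\ddbar\varphi)^n+\int_{M(1)}(i\dbar\ddbar\varphi)^n$ from above by a finite quantity, and the precise statement $\int_{M(0)}\leq-\int_{M(1)}$ comes from noting that the total volume integral equals $\int_{\widetilde M(0)}+\int_{\widetilde M(1)}=\int_{M(0)}(i\dbar\ddbar\varphi)^n+\int_{M(1)}(i\dbar\ddbar\varphi)^n$ and this leading coefficient must be $\leq0$ because $\dim H^0(\widetilde M,\widehat L^{p'}\otimes\cdots)$ is $o(p^n)$ when $\widehat L$ is a bundle of the form $\cO(-\text{effective divisor})$ tensored with a bundle of non-positive curvature — equivalently one invokes \eqref{e:Siegel} once more to see the left side is $o(p^n)$ since the Kodaira map of $\cO_M(-\text{effective})\otimes K_M$ powers has bounded rank. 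I would write this carefully, isolating the claim $\limsup_p\frac{n!}{p^n}\dim H^0(M,L^p\otimes K_M\otimes\cI(h^{L^p}))=0$ when $c_1(L,h^L)=\ddc\varphi$ with $\varphi$ bounded above (true here since $\varphi\leq u$ on a hyperconcave manifold's... no — $\varphi$ here is the weight with algebraic singularities, not the exhaustion; I mean: when $L$ is trivial) as the technical lemma, proved via Siegel's lemma \eqref{e:Siegel} applied on the blow-up.
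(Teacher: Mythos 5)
Your proposal follows the paper's route for both parts, but two points in your write-up need repair.

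In (i), the core argument is exactly the paper's: Theorem \ref{thm_hyper} plus the inclusion $H^0(M,L^p\otimes K_M\otimes\cI(h^{L^p}))\subset H^0(M,L^p\otimes K_M)$ and Siegel's lemma \eqref{e:Siegel}, then rearrange. However, your claim that $M(0)\subset Z$ is false: the hypothesis $c_1(L,h^L)\ge 0$ on $M\setminus Z$ only forces $M(\ge 1)\subset Z$, while $M(0)$ can be unbounded (e.g.\ if $c_1(L,h^L)>0$ everywhere on $M\setminus S(h^L)$). So you cannot get finiteness of $\int_{M(0)}c_1(L,h^L)^n$ a priori from compactness — that finiteness is part of the conclusion, and it comes out of Theorem \ref{thm_hyper} itself (its proof shows $\int_{M(\le 1)}c_1(L,h^L)^n$ exists in $\R$, via the resolution where the relevant form $m^{-n}c_1(\oh L,h^{\oh L})^n$ is smooth), or from the inequality combined with \eqref{e:Siegel}. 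Similarly, $M(1)\subset Z$ alone does not give finiteness of $\int_{M(1)}c_1(L,h^L)^n$, since the smooth form can blow up near $S(h^L)\subset Z$; again the identification $\int_{M(1)}c_1(L,h^L)^n=m^{-n}\int_{\til M(1)\setminus\til D}c_1(\oh L,h^{\oh L})^n$ over the relatively compact set $\pi^{-1}(Z)$ is what makes it finite. Quoting these finiteness facts from Theorem \ref{thm_tech}/Theorem \ref{thm_hyper} fixes the bookkeeping, and then your rearrangement is the paper's one-line proof.

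In (ii), your main move — apply (i) to the trivial bundle with singular metric $e^{-2\varphi}$ and use $\vol^*(\cO_M)=0$ — is precisely the paper's proof, and the ``obstacle'' you then raise is not there: \eqref{e:Siegel} is stated for an arbitrary holomorphic line bundle on a hyperconcave manifold, so applied to $L=\cO_M$ it gives $\dim H^0(M,L^p\otimes K_M)=\dim H^0(M,K_M)\le C<\infty$, and since this space is independent of $p$ one gets $\vol^*(\cO_M)=0$ immediately (for $n\ge 2$ finite-dimensionality also follows from Andreotti--Grauert on a $1$-concave manifold). Your substitute blow-up argument is therefore unnecessary, and as written it contains an unsound step: ``bounded Kodaira rank $\varrho_p$'' only yields a bound $O(p^{\varrho_p})$, which is not $o(p^n)$ unless you additionally use that the space in question is constant in $p$ (or that $\varrho_p<n$); the clean statement is simply the containment $H^0(M,K_M\otimes\cI(p\varphi))\subset H^0(M,K_M)$ together with the finite-dimensionality above, which is what the paper's ``we note that $\vol^*(L)=0$'' amounts to. Also keep the normalization straight: $c_1(\cO_M,e^{-2\varphi})=\tfrac{i}{\pi}\dbar\ddbar\varphi$, so the passage to $(i\dbar\ddbar\varphi)^n$ is a harmless positive rescaling, as you noted.
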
  

\begin{proof}
$(i)$ This follows at once from Theorem \ref{thm_hyper}, 
since by \eqref{e:Siegel},
\[\dim H^0(M,  L^p\otimes K_M\otimes \cI(h^{L^p}))\leq
\dim H^0(M,  L^p\otimes K_M)<+\infty.\]
$(ii)$ We apply $(i)$ to the trivial bundle $L=M\times \C$ 
endowed with the singular Hermitian metric $|(x,1)|^2=e^{-2\varphi(x)}$, 
and we note that $\vol^*(L)=0$.      
\end{proof}
 
When $S(h^L)=\emptyset$, Corollary \ref{cor_ajvol} 
was obtained in \cite[Corollary 3.4.11]{MM07}. 
When $S(h^L)=\emptyset$ and $M$ is compact, $(ii)$ 
is motivated by the calculus inequalities derived from holomorphic 
Morse inequalities \cite{Siu:90}.
 
Another consequence of Theorem \ref{thm_hyper} is the following 
Siu-Demailly-Bonavero type criterion for Moishezon spaces with isolated 
singularities. Recall that a Moishezon space is a compact irreducible complex 
whose algebraic dimension is equal to its complex dimension \cite{Moi:66}. 

\begin{cor} \label{cor_moisp}
Let $X$ be a compact irreducible complex space of dimension $n\geq 2$ 
with at most isolated singularities and $L$ be a holomorphic line bundle 
on the regular locus $X_{\reg}$. Let $h^L$ be a Hermitian metric on $L$ 
with algebraic singularities such that $S(h^L)\subset Z$ and 
$c_1(L,h^L)\geq 0$ on $X_{\reg}\setminus Z$ for some compact set
$Z\subset X_{\reg}$\,. If 
\[\int_{X_{\reg}(\leq 1)}c_1(L,h^L)^n>0,\]
then $X$ is Moishezon.   	 
 \end{cor}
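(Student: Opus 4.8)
\textbf{Proof proposal for Corollary \ref{cor_moisp}.}
The plan is to reduce the statement to Theorem \ref{thm_hyper} applied to the hyperconcave manifold $M:=X_{\reg}$. First I would recall, as noted in the excerpt (see \cite[Example 3.4.2]{MM07}), that since $X$ has at most isolated singularities the regular locus $X_{\reg}$ is a hyperconcave manifold of dimension $n$; the hypotheses on $h^L$, namely that $h^L$ has algebraic singularities with $S(h^L)\subset Z$ compact and $c_1(L,h^L)\geq 0$ on $X_{\reg}\setminus Z$, are exactly those of Theorem \ref{thm_hyper}. Applying that theorem gives, as $p\to\infty$,
\[
\dim H^0\big(X_{\reg},L^p\otimes K_{X_{\reg}}\otimes\cI(h^{L^p})\big)\geq \frac{p^n}{n!}\int_{X_{\reg}(\leq 1)}c_1(L,h^L)^n+o(p^n),
\]
so the hypothesis $\int_{X_{\reg}(\leq1)}c_1(L,h^L)^n>0$ forces $\dim H^0(X_{\reg},L^p\otimes K_{X_{\reg}}\otimes\cI(h^{L^p}))$ to grow like a positive constant times $p^n$. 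Since $\cI(h^{L^p})\subset\cO_{X_{\reg}}$, a fortiori $\dim H^0(X_{\reg},L^p\otimes K_{X_{\reg}})\geq c\,p^n$ for some $c>0$ and all large $p$.

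Next I would convert this growth statement into a statement about the algebraic dimension of $X$. By Siegel's lemma for hyperconcave manifolds, \eqref{e:Siegel}, one has $\dim H^0(X_{\reg},F^p\otimes K_{X_{\reg}})\leq C p^{\varrho_p}$ with $\varrho_p\leq n$ equal to the maximal rank of the Kodaira map; the lower bound $c\,p^n$ just obtained forces $\varrho_p=n$ for all $p$ large, i.e.\ the sections of $L^p\otimes K_{X_{\reg}}$ separate points generically and give a map to projective space whose image has dimension $n$. Equivalently, the line bundle $L\otimes K_{X_{\reg}}$ (or a suitable power) has maximal Iitaka–Kodaira dimension $n$ on $X_{\reg}$. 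It remains to see that sections of $L^p\otimes K_{X_{\reg}}$ on the \emph{open} manifold $X_{\reg}$ produce meromorphic functions on the \emph{compact space} $X$, thereby showing $X$ has algebraic dimension $n$ and hence is Moishezon.

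The main obstacle is precisely this last extension-across-the-singularities step: a priori a section of $L^p\otimes K_{X_{\reg}}$, or a ratio of two such sections, need not extend meromorphically to the isolated singular points of $X$. I would handle this by the standard mechanism already implicit in \cite[Example 3.4.2, Theorem 3.4.5]{MM07}: ratios of sections in $H^0(X_{\reg},L^p\otimes K_{X_{\reg}})$ are meromorphic functions on $X_{\reg}$, and because the singularities of $X$ are isolated (so of codimension $\geq 2$, as $n\geq 2$), such meromorphic functions extend across them by the Riemann extension theorem for the normalization of $X$; thus they lie in the field of meromorphic functions $\mathcal M(X)$. Since the generic rank of the associated map is $n$, the transcendence degree of $\mathcal M(X)$ over $\C$ is $n=\dim X$, which is the definition of $X$ being Moishezon. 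I would write this out carefully, noting that $n\geq 2$ is used exactly to guarantee the codimension-$\geq 2$ extension; for $n=1$ the statement is trivial or vacuous anyway. This completes the proof.
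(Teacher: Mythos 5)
Your proposal is correct and follows essentially the same route as the paper: hyperconcavity of $X_{\reg}$, Theorem \ref{thm_hyper} combined with Siegel's lemma \eqref{e:Siegel} to force maximal rank $\varrho_p=n$ of the Kodaira map, and then extension of the resulting meromorphic functions from $X_{\reg}$ to $X$ across the isolated (codimension $\geq 2$, since $n\geq 2$) singularities. The only cosmetic difference is in the last step, where the paper invokes Levi's removable singularity theorem (via \cite[Theorems 3.4.7, 3.4.8]{MM07}) rather than what you call the Riemann extension theorem for the normalization; the substance is the same.
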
     
 
 \begin{proof}
It is easy to see that $X_{\reg}$ is hyperconcave 
(see \cite[Example 3.4.2]{MM07}. 
By Theorem \ref{thm_hyper} we have 
\[\dim H^0(X_{\reg},L^p\otimes K_X)\geq 
\dim H^0(X_{\reg}, L^p\otimes K_X\otimes \cI(h^{L^p}))\geq Cp^n,\]
for some constant $C>0$ and all $p$ sufficiently large. 
It follows from Siegel's lemma \eqref{e:Siegel} that the Kodaira map 
associated to $H^0(X_{\reg}, L^p\otimes K_X)$ has maximal rank 
$\varrho_p=n$, for some $p$. Hence by \cite[Theorem 3.4.7]{MM07}, 
there exist $n$ algebraically independent meromorphic functions on 
$X_{\reg}$. These extend to meromorphic functions on $X$ 
by Levi's removable singularity theorem \cite[Theorem 3.4.8]{MM07}.
\end{proof} 
 
When $S(h^L)=\emptyset$, Corollary \ref{cor_moisp} 
was obtained in \cite[Theorem 3.4.10]{MM07}, \cite{Mar05}. When $X=X_{\reg}$, 
it reduces to Bonavero's criterion for Moishezon manifolds. 
Finally, when $X=X_{\reg}$ and $S(h^L)=\emptyset$,
this is the criterion of Siu-Demailly.   
 
\smallskip
 
We conclude this section by applying Theorem \ref{thm_hyper} to obtain a version of Bonavero's singular 
holomorphic Morse inequality for certain metrics with more 
general singularities than the algebraic ones. The setting is as follows.

Let $(X,\omega)$ be a compact Hermitian manifold 
of dimension $n$, $L$ be a holomorphic line bundle on $X$ 
and $h_0$ be a metric on $L$ with algebraic singularities. Then by \eqref{eq:wm2}, 
\[\dim H^0(X,L^p\otimes K_X\otimes \cI(h_0^{\otimes p}))\geq 
\frac{p^n}{n!}\int_{X(\leq 1)}c_1(L,h_0)^n+o(p^n),\, p\rightarrow\infty.\]

Set $A=S(h_0)$, for the singular locus of $h_0$. 
Let $U\subset X$ be an open set with 
$\overline U\subset X\setminus A$ 
and assume that there exists a psh function $\rho$ on $U$
such that $P=\{x\in U:\,\rho(x)=-\infty\}$ is compact and $\rho$
is smooth and strictly psh on $U\setminus P$. Moreover, assume that 
\begin{equation}\label{e:positive} 
c_1(L,h_0)\geq\varepsilon\omega\,\text{ on }U,
\end{equation}
for some constant $\varepsilon>0$. 
We fix a function $\chi\in C^\infty(X)$ such that 
$0\leq\chi\leq1$, $\supp\chi\subset U$ and $\chi=1$
on an open set $V\supset P$. For $t>0$ we define 
the singular metric $h_t$ on $L$ by
\begin{equation}\label{e:ht} 
h_t=h_0e^{-2t\chi\rho}.
\end{equation}
Note that the singular locus $S(h_t)=A\cup P$ and 
set $h_t^p:=h_t^{\otimes p}$.

\begin{thm}\label{T:hyperapp}
Let $(X,\omega)$ be a compact Hermitian manifold 
of dimension $n$, $L$ be a holomorphic line bundle on $X$ 
and $h_0$ be a metric on $L$ with algebraic singularities 
that verifies \eqref{e:positive}. Let $h_t$ be the singular Hermitian metric 
on $L$ defined in \eqref{e:ht}. Then there exists $t_0>0$ such 
that if $0<t\leq t_0$ we have, as $p\to\infty$,
\begin{equation}\label{e:hyperapp1}
\dim H^0(X,L^p\otimes K_X\otimes\cI(h_t^p))\geq 
\frac{p^n}{n!}\int_{X(\leq 1)}c_1(L,h_t)^n+o(p^n).
\end{equation}
If we assume in addition that 
\begin{equation}\label{e:Dem}
\int_{X(\leq1)}c_1(L,h_0)^n>0,
\end{equation}
then for every $\delta\in(0,1)$ there exists $t_1=t_1(\delta)>0$ such 
that if $0<t\leq t_1$ we have 
\begin{equation}\label{e:hyperapp2}
\dim H^0(X,L^p\otimes K_X\otimes\cI(h_t^p))\geq (1-\delta)\,
\frac{p^n}{n!}\int_{X(\leq 1)}c_1(L,h_0)^n+o(p^n),\;p\to\infty.
\end{equation}
\end{thm}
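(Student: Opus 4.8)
The plan is to realize $(X\setminus P, L, h_t)$ as an instance of Theorem \ref{thm_hyper} applied to a suitable hyperconcave manifold, and then to pass from the cohomology on $X\setminus P$ back to $X$ by the multiplier ideal bookkeeping. First I would set $M:=X\setminus P$ and observe that $\rho$ provides a strictly psh exhaustion of the end of $M$ near $P$: since $P$ is compact and $\rho$ is smooth strictly psh on $U\setminus P$ with $\rho=-\infty$ exactly on $P$, the sublevel sets $\{\rho>c\}$ for $c<\sup_{U\setminus P}\rho$ near $-\infty$ are relatively compact in $M$, so $M$ is hyperconcave (this is the standard "regular locus of a space with isolated singularities" model). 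On $M$ the metric $h_t=h_0e^{-2t\chi\rho}$ has algebraic singularities with $S(h_t)\cap M = A = S(h_0)$, which is compact in $M$. Next I would choose $t_0>0$ so that $c_1(L,h_t)=c_1(L,h_0)+t\,\ddc(\chi\rho)\geq 0$ on $M\setminus Z$ for a compact $Z\supset A$: on $V\supset P$ where $\chi\equiv 1$ the extra term is $t\,\ddc\rho\geq 0$, on $U\setminus V$ we absorb the bounded term $t\,\ddc(\chi\rho)$ into the $\varepsilon\omega$ slack from \eqref{e:positive} by taking $t_0\varepsilon$ small, and outside $U$ the metric is unchanged; thus the hypotheses of Theorem \ref{thm_hyper} hold on $M$ with this $Z$. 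Applying Theorem \ref{thm_hyper} gives
\[
\dim H^0(M, L^p\otimes K_M\otimes\cI(h_t^p))\geq
\frac{p^n}{n!}\int_{M(\leq 1)}c_1(L,h_t)^n+o(p^n).
\]

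The key step is then to identify $H^0(M,L^p\otimes K_M\otimes\cI(h_t^p))$ with $H^0(X,L^p\otimes K_X\otimes\cI(h_t^p))$. Since $P$ is compact and the sections in question are holomorphic sections of $L^p\otimes K_X$ over $X\setminus P$ that are moreover $L^1$ with weight $e^{-2p(t\chi\rho)}$ near $P$ after dividing by $h_0$, the weight forces enough vanishing at $P$ for a Riemann-type removable singularity statement: a section in $H^0(X\setminus P, L^p\otimes K_X\otimes\cI(h_t^p))$ extends across the compact set $P$ (using that $K_X$-valued holomorphic forms extend across compact analytic sets of the relevant codimension, or more elementarily that the $L^2$/$L^1$ condition with a genuinely singular weight kills the possible pole), and the multiplier ideal on $X$ along $P$ is accounted for by the $t\rho$-weight. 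I would make this precise by noting that $\cI(h_t^p)=\cI(h_0^p)\cdot\cI(2pt\chi\rho)$ locally and that $K_X$ appears, so the relevant extension is that of $L^2$ holomorphic $n$-forms across $P$. The right-hand side requires one more observation: $M(\leq 1,h_t)=X(\leq 1,h_t)$ up to the compact negligible set $P$, so the integral over $M(\leq 1)$ equals the integral over $X(\leq 1)$, which yields \eqref{e:hyperapp1}.

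For \eqref{e:hyperapp2} I would compare $\int_{X(\leq 1)}c_1(L,h_t)^n$ with $\int_{X(\leq 1)}c_1(L,h_0)^n$ as $t\to 0$. Writing $c_1(L,h_t)=c_1(L,h_0)+t\beta$ with $\beta=\ddc(\chi\rho)$ a fixed current supported in $\overline U$, and using that on $\overline U$ we have $c_1(L,h_0)\geq\varepsilon\omega>0$ so $c_1(L,h_t)>0$ there for $t$ small (hence $\overline U\cap X(\leq 1,h_t)=\emptyset$ eventually), the difference of the two integrals is $\int_{X(\leq 1,h_t)}c_1(L,h_t)^n-\int_{X(\leq 1,h_0)}c_1(L,h_0)^n$; on the region away from $\overline U$ the forms agree and the index sets agree, while the symmetric difference of the two index sets is contained in $\overline U$ where the integrand is controlled. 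Thus, given $\delta$ and using \eqref{e:Dem} that the $h_0$-integral is strictly positive, one picks $t_1\leq t_0$ small enough that $\int_{X(\leq 1)}c_1(L,h_t)^n\geq (1-\delta)\int_{X(\leq 1)}c_1(L,h_0)^n$, and \eqref{e:hyperapp2} follows from \eqref{e:hyperapp1}. The main obstacle I expect is the extension-across-$P$ step: one must verify carefully that the multiplier-ideal/$L^2$ condition coming from the algebraic singularity of $h_0$ together with the $t\rho$ blow-up genuinely gives a removable-singularity statement for $K_X$-twisted sections over the compact set $P$, rather than merely over an analytic hypersurface; here the compactness of $P$ and the strict positivity \eqref{e:positive} near $\partial U$ are what make the argument work, and one may need the Andreotti–Grauert / Scheja-type extension of coherent cohomology across compact sets together with the Nadel vanishing already invoked in Theorem \ref{thm_tech}(C).
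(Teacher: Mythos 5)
Your overall strategy is the paper's: pass to the hyperconcave manifold $M=X\setminus P$ (for this you must also take the compact set $Z$ in Theorem \ref{thm_hyper} to contain $X\setminus U$, e.g.\ $Z=X\setminus U$, since nonnegativity of $c_1(L,h_t)$ is only verified on $U$), apply Theorem \ref{thm_hyper}, extend sections across $P$, and then compare the two curvature integrals. However, two steps as written have genuine gaps. First, the extension across $P$: in this theorem $P$ is in general only a compact \emph{complete pluripolar} set (see Example \ref{R:hyper-ex}: a countable compact set, or a polar subset of a line), not an analytic subset, so "extension of $K_X$-valued forms across compact analytic sets of the relevant codimension" and Andreotti--Grauert/Scheja-type coherent extension plus Nadel vanishing are not applicable tools here. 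The argument that actually works is elementary: near a point of $P$ choose a chart $W$ with $\chi\equiv1$ and a frame of $L$; a section of $L^p\otimes K_X\otimes\cI(h_t^p)$ on $M$ corresponds to $f\in\cO(W\setminus P)$ with $\int_W|f|^2e^{-2pt\rho}\,d\lambda<\infty$, and since $\rho$ is \emph{bounded above} this forces $\int_W|f|^2\,d\lambda<\infty$; then $f$ extends because closed pluripolar sets are removable for $L^2$ holomorphic functions, and the extension still lies in $\cI(h_t^p)$ since $P$ is Lebesgue-null. Your "the $L^2$ condition kills the pole" gestures at this, but you never invoke the upper-boundedness of $\rho$, and the alternative route you propose fails. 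Also the identity $\cI(h_t^p)=\cI(h_0^p)\cdot\cI(2pt\chi\rho)$ is false in general (only subadditivity holds); it is also unnecessary, since $h_0$ is smooth near $P$ because $\overline U\cap S(h_0)=\emptyset$.

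Second, the comparison of integrals for \eqref{e:hyperapp2} is not correct as stated. The claim $\overline U\cap X(\le1,h_t)=\emptyset$ is false: by \eqref{e:posU} one has $U\setminus P\subset X(0,h_t)\subset X(\le1,h_t)$ (you presumably meant $X(1,h_t)\cap U=\emptyset$). Moreover, the difference of the two integrals is not concentrated on the symmetric difference of the index sets: since $X(\le1,h_t)$ and $X(\le1,h_0)$ coincide outside $U$ and differ inside $U$ only by the null set $P$, the difference equals $\int_{U\setminus P}\bigl(c_1(L,h_t)^n-c_1(L,h_0)^n\bigr)$, and this integrand is \emph{not} "controlled" or $O(t)$ near $P$, where $\ddc\rho$ is unbounded. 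The missing idea is the splitting used in the paper: on $V\setminus P$ one has $t\,\ddc\rho\ge0$, hence $c_1(L,h_t)^n\ge c_1(L,h_0)^n$ pointwise and that contribution is nonnegative, while only on the compact set $\overline U\setminus V$, where $\ddc(\chi\rho)$ is bounded, is the difference uniformly $O(t)$; combining these with \eqref{e:Dem} yields \eqref{e:comp} and then \eqref{e:hyperapp2}. Without this sign argument near $P$ your step does not go through.
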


\begin{proof} Let $M=X\setminus P$. Then $M$ is hyperconcave, 
as the function $\chi\rho$ is smooth on $M$, strictly psh on $V\setminus P$, 
and $-\chi\rho$ is an exhaustion of $M$. Note that $h_t$ 
is a metric with algebraic singularities on $L|_M$. Indeed, $h_t=h_0$ 
on $X\setminus U$ and $h_t$ is smooth on $U\setminus P$. 
Since $\chi\rho=\rho$ is psh on $V$, it follows using \eqref{e:positive} that 
\begin{equation}\label{e:posU}
c_1(L,h_t)=c_1(L,h_0)+t\,\ddc(\chi\rho)\geq\frac{\varepsilon}{2}\,\omega
\end{equation}
holds on $U$, for $0<t\leq t_0$ and some $t_0>0$.
The set $Z=X\setminus U$ is compact and contained in $M$, 
$S(h_t)\cap M=A\subset Z$, and $c_1(L,h_t)\geq0$ on $M\setminus Z$. 
Hence by Theorem \ref{thm_hyper},
\begin{equation}\label{e:hyperconc}
\dim H^0(M, L^p|_M\otimes K_M\otimes\cI(h_t^p))\geq 
\frac{p^n}{n!}\int_{X(\leq 1)}c_1(L,h_t)^n+o(p^n), \text{ as }p\to\infty.
\end{equation}

Let $S\in H^0(M,L^p|_M\otimes K_M\otimes\cI(h_t^p))$ 
and $x\in P$. Fix a coordinate neighborhood $W$ of $x$ 
such that $\chi=1$ and $L$ has a holomorphic frame $e_W$ on $W$. 
Then $S=f\,e_W^{\otimes p}\otimes(dz_1\wedge\ldots\wedge dz_n)$, 
where $f\in\cO_X(W\setminus P)$ verifies
$\int_W|f|^2e^{-2tp\rho}\,d\lambda<+\infty$ with $\lambda$ 
the Lebesgue measure on $W$. Since $\rho$ is upper bounded 
this implies that $\int_W|f|^2\,d\lambda<+\infty$. 
Hence $f$ extends to a holomorphic function on $W$, 
since pluripolar sets are removable for square integrable 
holomorphic functions (see e.g.\ \cite[Theorem 5.17]{Oh:02}). 
We conclude that $S$ extends to a section of 
$L^p\otimes K_X\otimes\cI(h_t^p)$, so 
\[H^0(M,L^p|_M\otimes K_M\otimes \cI(h_t^p))=
H^0(X,L^p\otimes K_X\otimes\cI(h_t^p)).\]
Thus \eqref{e:hyperapp1} follows from \eqref{e:hyperconc}.

Assume next that \eqref{e:Dem} holds and let $\delta\in(0,1)$. 
We claim that there exists $t_1=t_1(\delta)>0$ such that 
\begin{equation}\label{e:comp}
\int_{X(\leq 1,\,h_t)}c_1(L,h_t)^n\geq(1-\delta)\int_{X(\leq 1,\,h_0)}c_1(L,h_0)^n, \text{ for } 0<t\leq t_1.
\end{equation}
Indeed, $U\subset X(0,h_0)$ by \eqref{e:positive}, 
and $U\setminus P\subset X(0,h_t)$ by \eqref{e:posU}, 
provided that $t\leq t_0$. Since $h_t=h_0$ on $X\setminus U$, 
it follows that $X(1,h_t)=X(1,h_0)\subset X\setminus U$ and 
$X(0,h_t)=X(0,h_0)\setminus P$. Therefore
\begin{align*}
\int_{X(\leq 1,\,h_t)}c_1(L,h_t)^n &=
\int_{X(1,\,h_0)}c_1(L,h_0)^n+
\int_{X(0,\,h_0)\setminus P}c_1(L,h_t)^n\\
&=\int_{X(\leq1,\,h_0)}c_1(L,h_0)^n+
\int_{X(0,\,h_0)\setminus P}c_1(L,h_t)^n-c_1(L,h_0)^n\\
&=\int_{X(\leq1,\,h_0)}c_1(L,h_0)^n+
\int_{U\setminus P}c_1(L,h_t)^n-c_1(L,h_0)^n.
\end{align*}
Since $c_1(L,h_t)=c_1(L,h_0)+t\,\ddc\rho>c_1(L,h_0)>0$
on $V\setminus P$ we infer that
\[\int_{V\setminus P}c_1(L,h_t)^n-c_1(L,h_0)^n>0.\]
We conclude by above that 
\[\int_{X(\leq 1,\,h_t)}c_1(L,h_t)^n>
\int_{X(\leq1,\,h_0)}c_1(L,h_0)^n+
\int_{U\setminus V}c_1(L,h_t)^n-c_1(L,h_0)^n.\]
Now \eqref{e:comp} follows using \eqref{e:Dem}
and the fact that 
\[c_1(L,h_t)^n-c_1(L,h_0)^n=t\,
\ddc(\chi\rho)\wedge\sum_{j=0}^{n-1}\big(c_1(L,h_0)+
t\,\ddc(\chi\rho)\big)^j\wedge 
c_1(L,h_0)^{n-1-j}=O(t)\]
on the compact set $\overline U\setminus V$. 
Since \eqref{e:hyperapp1} and \eqref{e:comp} imply 
\eqref{e:hyperapp2}, the proof is complete. 
\end{proof}

\begin{example}\label{R:hyper-ex}
We observe that there are many situations in which 
Theorem \ref{T:hyperapp} applies, if one chooses $U$ 
to be an open coordinate set (or a disjoint union of such sets) 
and $\rho$ a psh function on $U$ with the desired properties. 
Simple examples can be given as follows. Let $D$ be a polydisc 
centered at $0$ in ${\mathbb C}^n$ and write 
$z=(z',z_n)\in{\mathbb C}^{n-1}\times\mathbb C$. 
Let  $P_1=\{\zeta_j:\,j\geq1\}$ 
be an at most countable compact subset of $D$ and $\varepsilon_j>0$
be chosen small enough so that the function 
\[u_1(z)=\sum_{j=1}^\infty\varepsilon_j\log\|z-\zeta_j\|\]
is psh on $D$ and smooth on $D\setminus P$. 
Let next $P_2\subset D\cap\{z'=0\}$ be a compact polar set. 
By a theorem of Evans, there exists a probability measure 
$\mu$ supported on $P_2$ whose logarithmic potential 
\[L_\mu(z_n)=\int_{\mathbb C}\log|z_n-w|\,d\mu(w)\]
is subharmonic on $\mathbb C$, harmonic on $\mathbb C\setminus P_2$ 
and equal to $-\infty$ on $E$. Set
 \[u_2(z)={\max}_\eta\{\log\|z'\|,L_\mu(z_n)\},\] 
where $0<\eta<1$ is fixed and ${\max}_\eta$ 
denotes the regularized maximum as constructed in 
\cite[Lemma 5.18]{Dem_agbook}. 
Then the functions $\rho_j(z)=u_j(z)+\|z\|^2$, $z\in D$, $j=1,2$, 
are strictly psh on $D$, smooth on $D\setminus P_j$, and equal to 
$-\infty$ on $P_j$.
\end{example}

 \subsection{$q$-concave manifolds}\label{S:qconc}
According to \cite{AG:62}, a complex manifold $M$ of dimension $n$ 
is called \emph{$q$-concave} for some $q\in\{1,\ldots,n\}$, 
if there exists a smooth function 
$\varphi:M\longrightarrow (a,b]$, where $\,a\in\R\cup\{-\infty\}$ and $b\in\R$, 
 so that
 $M_c:=\{\varphi>c\}\Subset M$ for all $c\in(a,b]$ and there
 exists a compact subset $K\subset M$ such that
 $i\partial\ddbar\varphi$ has at least $n-q+1$ positive eigenvalues on 
 $M\setminus K$. 
By the Andreotti-Grauert theory \cite{AG:62}, $H^j(X,\cF)$ is finite dimensional
for any $j\leq n-q-1$ and any coherent analytic sheaf $\cF$ on a $q$-concave 
manifold $X$.

 By applying \cite[Corollary 4.3]{Mar96} and the same method as above, we obtain:
 \begin{thm}
 Let $M$ be a $q$-concave manifold of 
dimension $n\geq 3$, and $L,E$ be holomorphic vector bundles 
on $M$ with $\rank(L)=1$. Let $h^L$ be a Hermitian metric on 
$L$ with algebraic singularities such that $S(h^L)\subset Z$ 
and $c_1(L,h^L)\leq 0$ on $X\setminus Z$ for some compact $Z\subset M$. 
Then, for any $\ell\leq n-q-2$, we have as $p\rightarrow \infty$, 
\begin{align*}  
\sum_{j=0}^\ell(-1)^{\ell-j}\dim H^j(M,L^p\otimes E\otimes \cI(h^{L^p}))&
\leq \rank(E)\,\frac{p^n}{n!}\int_{M(\leq\ell)}(-1)^\ell c_1(L,h^L)^n+o(p^n),\\
\dim H^\ell(M,L^p\otimes E\otimes\cI(h^{L^p}))&\leq 
\rank(E)\,\frac{p^n}{n!}\int_{M(\ell)}(-1)^\ell c_1(L,h^L)^n+o(p^n).
\end{align*}
\end{thm}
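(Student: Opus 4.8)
The plan is to mimic the proof of Theorem~\ref{thm_hmi_q}, replacing $q$-convexity by $q$-concavity and the smooth input Theorem~\ref{lem_qhmi} by the smooth holomorphic Morse inequalities for $q$-concave manifolds, \cite[Corollary 4.3]{Mar96}. First I would apply Theorem~\ref{thm_tech} to obtain the proper modification $\pi:\til M\to M$, so that $\til D=\pi^{-1}(S(h^L))\subset\pi^{-1}(Z)$ is compact and $\pi:\til M\setminus\til D\to M\setminus S(h^L)$ is biholomorphic. Enlarging the exceptional set $K$ of $M$ to $K\cup Z$ if necessary, the pulled-back exhaustion $\varphi\circ\pi$ exhibits $\til M$ as a $q$-concave manifold with exceptional set $\pi^{-1}(K\cup Z)$, which is compact because $\pi$ is proper. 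By \eqref{e:c1} the smooth metric $h^{\oh L}$ satisfies $c_1(\oh L,h^{\oh L})=mc_1(\til L,h^{\til L})=m\,\pi^*c_1(L,h^L)\le 0$ on $\til M\setminus\pi^{-1}(Z)$.

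Next I would run the $p=mp'+m'$ decomposition, $0\le m'<m$, exactly as in \eqref{e:decomp} and \eqref{e:remainder}: setting $F_{m'}=\til E\otimes\til{K_M^*}\otimes K_{\til M}\otimes\til L^{m'}\otimes\cO_{\til M}(-\sum_{j=1}^k\lfloor m'cc_j\rfloor\til D_j)$, a bundle of rank $\rank(E)$, the isomorphism \eqref{e:isoms} combined with \eqref{e:decomp} gives
\[H^j(M,L^p\otimes E\otimes\cI(h^{L^p}))\cong H^j(\til M,\oh L^{p'}\otimes F_{m'})\]
for all large $p$ and $0\le j\le n$. Applying \cite[Corollary 4.3]{Mar96} on $\til M$ to $(\oh L,h^{\oh L})$ and to each of the finitely many bundles $F_{m'}$, $0\le m'<m$, then yields, for every $\ell\le n-q-2$ and all large $p$,
\[\sum_{j=0}^\ell(-1)^{\ell-j}\dim H^j(\til M,\oh L^{p'}\otimes F_{m'})\le\rank(E)\,\frac{(p')^n}{n!}\int_{\til M(\le\ell)}(-1)^\ell c_1(\oh L,h^{\oh L})^n+o(p^n).\]

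It remains to identify the integrals and remove the tildes, which is where $q$-concavity does its work. Since $c_1(\oh L,h^{\oh L})\le 0$ off the compact set $\pi^{-1}(Z)$, any point of $\til M(k)$ with $k\le\ell\le n-q-2$ has at least $n-k\ge q+2\ge 1$ positive curvature eigenvalues, hence lies in $\pi^{-1}(Z)$; thus $\til M(\le\ell)\subset\pi^{-1}(Z)$ is relatively compact, the integral above is finite, and likewise $M(\le\ell)\subset Z$. As $\pi$ is biholomorphic over $M\setminus S(h^L)$ and $c_1(\oh L,h^{\oh L})=m\,\pi^*c_1(L,h^L)$ there, the signature of the curvature is preserved, so $\til M(k)\setminus\til D=\pi^{-1}(M(k))$; since $\til D$ is a null set for the smooth form $c_1(\oh L,h^{\oh L})^n$,
\[\int_{\til M(\le\ell)}c_1(\oh L,h^{\oh L})^n=m^n\int_{\til M(\le\ell)\setminus\til D}c_1(\til L,h^{\til L})^n=m^n\int_{M(\le\ell)}c_1(L,h^L)^n,\]
and in particular the right-hand integral is finite. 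Combining the last three displays with $(mp')^n=p^n+o(p^n)$ gives the strong Morse inequality. For $\ell\ge 1$ the weak inequality for $H^\ell$ follows by adding the strong inequalities for $\ell$ and $\ell-1$, using $(-1)^\ell\big(\int_{M(\le\ell)}-\int_{M(\le\ell-1)}\big)c_1(L,h^L)^n=(-1)^\ell\int_{M(\ell)}c_1(L,h^L)^n$; for $\ell=0$ the two inequalities coincide.

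The only point requiring genuine checking, as opposed to routine transcription of the $q$-convex argument, is the preservation of $q$-concavity under the modification: the blow-up centres lie in $S(h^L)\subset Z$, a compact set, so absorbing $\pi^{-1}(S(h^L))$ into the exceptional set costs only a compact enlargement, and the sublevel sets $\{\varphi\circ\pi>c\}=\pi^{-1}(\{\varphi>c\})$ stay relatively compact. Everything else --- the bookkeeping of the twists $F_{m'}$, the transfer of the sign condition $c_1\le 0$ through \eqref{e:c1}, and the finiteness of the integrals forced by $q$-concavity --- is exactly parallel to the proof of Theorem~\ref{thm_hmi_q}.
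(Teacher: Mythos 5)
Your proposal is correct and follows exactly the route the paper intends: the paper's proof consists precisely of the remark that one applies \cite[Corollary 4.3]{Mar96} on the blow-up from Theorem \ref{thm_tech} together with the same reduction (the $p=mp'+m'$ decomposition, the bundles $F_{m'}$, and \eqref{e:isoms}) used for Theorem \ref{thm_hmi_q}. Your additional checks --- preservation of $q$-concavity under the modification, $\til M(\leq\ell)\subset\pi^{-1}(Z)$ forcing finiteness of the integrals, and deriving the weak inequality by summing the strong inequalities for $\ell$ and $\ell-1$ --- are exactly the details the paper leaves implicit.
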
    

\noindent 
\textbf{Acknowledgements.}
Huan Wang thanks Claudio Arezzo, Xiaoshan Li, Bo Liu and Guokuan Shao  
for support during preparing this manuscript.


\def\cprime{$'$}

\end{document}